\DeclareFontFamily{OT1}{rsfs}{}
\DeclareFontShape{OT1}{rsfs}{n}{it}{<-> rsfs10}{}
\DeclareMathAlphabet{\mathscr}{OT1}{rsfs}{n}{it}
\DeclareMathOperator{\dist}{dist}
\DeclareMathOperator{\Reg}{Reg}
\DeclareMathOperator{\Irr}{Irr}
\DeclareMathOperator{\Tr}{T}
\DeclareMathOperator{\Core}{Core}
\DeclareMathOperator{\Quot}{Quot}
\DeclareMathOperator{\res}{res}
\DeclareMathOperator{\reg}{reg}
\DeclareMathOperator{\sgn}{sgn}
\DeclareMathOperator{\Mb}{M_b}
\DeclareMathOperator{\Mp}{M_p}
\DeclareMathOperator{\Mtwo}{M_2}
\DeclareMathOperator{\MnT}{M_nT}
\DeclareMathOperator{\MfourT}{M_4T}
\DeclareMathOperator{\MkkT}{M_{b_{k}}T}
\DeclareMathOperator{\MkT}{M_{b_{k+1}}T}
\DeclareMathOperator{\MbT}{M_bT}
\DeclareMathOperator{\MdT}{M_dT}
\DeclareMathOperator{\Cr}{colreg}
\DeclareMathOperator{\Crab}{colreg_{a,b}}
\DeclareMathOperator{\Crbab}{colreg_{b-a,b}}
\DeclareMathOperator{\Crabk}{colreg_{a_{k+1},b_{k+1}}}
\DeclareMathOperator{\Slope}{Slope}
\DeclareMathOperator{\X}{X}
\DeclareMathOperator{\I}{I}
\DeclareMathOperator{\J}{J}
\DeclareMathOperator{\Xb}{X_b}
\renewcommand{\boxed}[1]{\text{\fboxsep=.2em\fbox{\m@th$\displaystyle#1$}}}
\newcommand{\con}{\textrm{-}}
\theoremstyle{plain}
\newtheorem{Theorem}{Theorem}[section]
\newtheorem{Proposition}[Theorem]{Proposition}
\newtheorem{Lemma}[Theorem]{Lemma}
\newtheorem{Corollary}[Theorem]{Corollary}
\newtheorem{Conjecture}[Theorem]{Conjecture}
\newtheorem{Example}[Theorem]{Example}
\theoremstyle{definition}
\newtheorem{Definition}[Theorem]{Definition}
\theoremstyle{remark}
\newtheorem{Remark}[Theorem]{Remark}
\newtheorem{Claim}[Theorem]{Claim}
\renewcommand{\email}[2][]{%
  \ifx\emails\@empty\relax\else{\g@addto@macro\emails{,\space}}\fi%
  \@ifnotempty{#1}{\g@addto@macro\emails{\textrm{(#1)}\space}}%
  \g@addto@macro\emails{#2}%
}
\title{The title}%
\author{Panagiotis Dimakis}
\address[P. Dimakis]{Department of Mathematics\\ Stanford University\\ Stanford, CA, 94305, USA}
\email[P. Dimakis]{pdimakis12345@gmail.com}
\author{Guangyi Yue}
\address[G. Yue]{Department of Mathematics\\ Massachusetts Institute of Technology\\ Cambridge, MA, 02139, USA}
\email[G. Yue]{gyyue@mit.edu}
\begin{document}

\setcounter{tocdepth}{4}

\title{Combinatorial wall-crossing and the Mullineux Involution}



\maketitle
\begin{abstract}
In this paper, we define the combinatorial wall-crossing transformation and the generalized column regularization on partitions and prove that a certain composition of these two transformations has the same effect on the one-row partition $(n)$. As corollaries we explicitly describe the quotients of the partitions which arise in this process. We also prove that the one-row partition is the unique partition that stays regular at any step of the wall-crossing transformation.

\end{abstract}

\tableofcontents

\section{Introduction}
Wall-crossing functors appear in the context of infinite-dimensional representations of complex semisimple Lie algebras, and Beilinson and Ginzburg studied its relation with translation functors in \cite{beilinson1999wall}. More recently, wall-crossing functors have appeared in the study of quantized symplectic resolutions of singularities as perverse equivalences between different categories of modules, for more details one can look at \cite{anno2011stability,bezrukavnikov2017dimension,losev2015supports}. These perverse equivalences induce bijections between irreducible objects in the corresponding derived categories, which are referred to as the combinatorial wall-crossing. In the classical case of Lie algebra representations they are related to the cactus group actions \cite{halacheva2017crystals,losev2015cacti}. It is called combinatorial wall-crossing because in case of rational Cherednik algebras of type $A$, the derived categories are parametrized by rational numbers, and the bijection among two categories parametrized by consecutive rational numbers (with denominator bounded above) is like crossing a wall.  Our work is motivated by combinatorial wall-crossing for representations of rational Cherednik algebras in large positive characteristic, where the combinatorial wall-crossing is given by the extended Mullineux involution due to Losev \cite{losev2015supports}. This collection of permutations on the set of partitions is our main object of study.


Based on Kleshchev's work in \cite{kleshchev1996branching,ford1997proof}, the irreducible $p\con$modular representations of the symmetric group $\mathcal{S}_n$ are labeled by the $p\con$regular partitions of $n$; we denote the irreducible representation corresponding to the $p\con$regular partition $\lambda$ by $\rho_\lambda$.  
\begin{Definition}
\label{RepMull}
The Mullineux involution $\Mp$ is the involution on the set of $p\con$regular partitions satisfying
$$\rho_{\lambda^{\Mp}}=\rho_\lambda\otimes\sgn$$ where $\sgn$ is the sign representation. 
\end{Definition}
There are a few combinatorial ways to define $\Mp$ in \cite{kleshchev1996branching,ford1997proof}, where $p$ is not necessarily prime, and this is the foundation of our investigation.

In this paper, we study the behavior of one-row partition $(n)$ under composition of a series of wall-crossing transformations intensively. A strong monotonicity property motivates a generalized version of column regularization on partitions, which was originally defined in \cite{james16representation}. The relationship of Mullineux map and the original column regularization was studied by Walker, Bessenrodt, Olsson and Xu in \cite{walker1996horizontal,walker1994modular,bessenrodt1999properties,bessenrodt1998residue}. We generalize the definition of column regularization to two co-prime parameters, which can be understood as a rational number in the unit interval. This construction leads to the main result of this paper given in Theorem \ref{main}, which is that the combinatorial wall-crossing and a certain composition of generalized column regularization procedures have the same effect on the one-row partition. 

The most important consequence of this result, stated as Theorem \ref{monotone}, is that this one-row partition case is the only case where monotonicity holds at each step of the composition of the transformations. We were kindly informed by Losev that he has an alternative proof of Theorem \ref{monotone} using Heisenberg actions and perverse equivalences while our method is purely combinatorial. Also Theorem \ref{monotone} answers a question by Bezrukavnikov which is motivated by potential applications to the study of nabla operators and Haiman's $n!$ conjecture in \cite{haiman2002combinatorics}.

The paper is organized as follows. Section \ref{secpre} is an overview of preliminaries. The combinatorial wall-crossing transformation is defined in Section \ref{mainsection} and is followed by the main theorem. Then in Section \ref{secquotient} a description of the quotients of the series of partitions that arise when we apply the wall-crossing transformation to the one-row partition is presented and the property of uniqueness of monotonicity is proved in Section \ref{secunique}. We end with an explicit demonstration of the wall-crossing transformation to every partition of 5, and a general conjecture given by Bezrukavninov in the Appendix \ref{appendixa}.

{\bf Acknowledgements.} The authors would like to thank Roman Bezrukavnikov for suggesting this project to us and continuous discussions and help throughout the whole process. Also, the authors are grateful to Ivan Losev and to Galyna Dobrovolska for many discussions and to Seth Shelley-Abrahamson for useful revision suggestions.

\section{Preliminaries}

\label{secpre}
A \textit{partition} $\lambda$ of $n\in\mathbb{N}$ is a finite tuple of weakly decreasing positive integers $\lambda=(\lambda_1,...,\lambda_k)$ where $\lambda_1\geq...\geq\lambda_k>0$ and $|\lambda|\coloneqq\sum_{i=1}^k\lambda_i=n$. The exponential version of a partition is $\lambda=(\lambda_1^{s_1},...,\lambda_k^{s_k})$, where the superscript $s_i$ indicates the number of repetitions of the part $\lambda_i$ and $\lambda_1>...>\lambda_k$. Denote all partitions by $\mathcal{P}$ and partitions of $n$ by $\mathcal{P}_n$. The \textit{Young diagram} corresponding to a given partition $\lambda$ is the set of unit boxes specified as follows. Fix the x-axis pointing to the south and the y-axis pointing to the east. Then the coordinates of the  southeast vertices of the boxes of the diagram are given by: $$(i,j)\in\{\mathbb{N}\times \mathbb{N}\mid 1\le i, 1\le j \le \lambda_i\}.$$
We also label by $(i,j)$ the box whose southeast vertex has coordinates $(i,j)$.
The transpose $\lambda^{\Tr}$ of a Young diagram $\lambda$ is given by: $$\{(i,j)\in\mathbb{N}\times \mathbb{N}\mid 1\le j, 1\le i \le \lambda_j\}.$$ 
Given a box $(i,j)\in \lambda$, the \textit{arm} $a_{ij}=a_{ij}(\lambda)$ is the set of boxes $(i,j')\in\lambda$ with $j<j'$. We use $a_{ij}$ to denote either the above set or the number of elements of the above set interchangeably. 
Similarly, the \textit{leg} $l_{ij}=l_{ij}(\lambda)$ is the set of boxes $(i',j)\in\lambda$ with $i<i'$. We use $l_{ij}$ to denote either the above set or the number of elements of the above set interchangeably as well. Finally the \textit{hook} $H_{ij}=H_{ij}(\lambda)$ is the union of sets $(i,j)\cup a_{ij}\cup l_{ij}$. The number of elements of the hook is also denoted by $H_{ij}$ and is equal to $1+a_{ij}+l_{ij}$.

Let $\lambda\in\mathcal{P}_n$. A box $A\in\lambda$ is called a \textit{removable} box of $\lambda$ if $\lambda\setminus A\in\mathcal{P}_{n-1}$. A box $B\notin\lambda$ is called an \textit{addable} box of $\lambda$, if $\lambda\cup B\in\mathcal{P}_{n+1}$. The \textit{rim} of $\lambda$ consists of the boxes $(i,j)\in\lambda$ such that $(i+1,j+1)\notin\lambda$. The \textit{boundary} of $\lambda$ is defined to be the rim of  $\tilde{\lambda}$ where $$\tilde{\lambda}=\lambda\bigcup_{(i,j)\mbox{ addable to }\lambda}(i,j).$$

A skew shape $\lambda/\mu$, where $\mu\subset\lambda$, is the collection of boxes in $\lambda$ but not in $\mu$. If $\lambda/\mu$ does not contain any $2\times 2$ squares, then it is called a \textit{ribbon}. Note that every $(i,j)\in\lambda$ corresponds to a ribbon of size $H_{ij}$ containing in the rim of $\lambda$.

Fix a number $b\in \mathbb{N}$. We will call a Young diagram $\lambda$ $b$-regular if there exist no $i\in \mathbb{N}$ such that $\lambda_i=\lambda_{i+1}=...=\lambda_{i+b-1}>0$. Also for a box $A=(i,j)$ the \textit{residue} of $A$ with respect to $b$, denoted by $\res A$, is the residue class $(j-i)$ mod $b$. 
\begin{Definition}
\label{irregular}
Given a partition $\lambda$ and a positive integer $b$, $\lambda$ can be uniquely written as a union of multisets $$\lambda=\nu\cup\mu$$ where each part of $\nu$ has multiplicity less than $b$ and each part of $\mu$ has multiplicity being a multiple of $b$. Denote $\Reg_b(\lambda)=\nu$ as the regular part of $\lambda$ and the irregular part $\Irr_b(\lambda)$ is defined by $\mu=b\star\Irr_b(\lambda)$, where the operator $b\,\star$ is to repeat each part of the partition $b$ times. This decomposition is called the $b\con$regular decomposition of $\lambda$.
\end{Definition}

Next, we define the core and quotient of a partition following \cite{haiman2002combinatorics}. 

\begin{Definition}
\label{haimancore}

A partition $\lambda$ is a $b$-core if it does not contain any ribbon of length $b$. The $b\con$core $\Core_b(\lambda)$ of any partition $\lambda$ is the partition that remains after one removes as many $b\con$ribbons in succession as possible. The result is independent of choices of removals.

\end{Definition}

\begin{Definition}
\label{haimanquotient}
For any box $A=(i,j)\in\lambda$, let $B$ and $C$ be the boxes at the end of the arm $a_{ij}$ and the leg $l_{ij}$ respectively. Then $H_{ij}$ is divisible by $b$ precisely when res $B=k$ and res $C=k+1$ for some $k\in\{0,1,...,b-1\}$. Now for some fixed $k$ the boxes $A$ with res $B=k$ and  $\res C=k+1$ form an "exploded" copy of a partition which we denote $\lambda_k$. The quotient of a partition $\lambda$ is defined to be the $b\con$tuple of partitions, $\Quot_b(\lambda) =(\lambda_0,\lambda_1,...,\lambda_{b-1})$.

\end{Definition}

\begin{Example}
Let $\lambda=(6,5,3,3,2,1,1)$ and $b=4$, and the residue of the rim of $\lambda$ are labelled in the picture.

\begin{center}
\begin{tikzpicture}[scale=0.5]
\draw (0,0) -- (0,7);
\draw (1,0) -- (1,7);
\draw (2,2) -- (2,7);
\draw (3,3) -- (3,7);
\draw (5,5) -- (5,7);
\draw (6,6) -- (6,7);
\draw (0,0) -- (1,0);
\draw (0,1) -- (1,1);
\draw (0,2) -- (2,2);
\draw (0,3) -- (3,3);
\draw (0,4) -- (3,4);
\draw (0,5) -- (5,5);
\draw (0,6) -- (6,6);
\draw (0,7) -- (6,7);
\draw (4,5) -- (4,7);
\draw[line width=1.2pt] (0,0) -- (1,0) -- (1,2) --(2,2) -- (2,3) -- (0,3) -- (0,0);
\draw[line width=1.2pt] (0,3) -- (0,4) --(2,4) -- (2,5) -- (3,5) -- (3,3) --(2,3);
\draw[line width=1.2pt] (3,5) -- (5,5) -- (5,6) --(6,6) -- (6,7) --(4,7) --(4,6) -- (3,6) -- (3,5);
\draw[line width=1.2pt] (0,4) -- (0,5) -- (1,5) -- (1,6) --(3,6);
\draw[pattern=north west lines, pattern color=blue] (0,5) rectangle (1,7);
\draw[pattern=north west lines, pattern color=blue] (1,6) rectangle (4,7);
\draw (8,0) -- (8,7);
\draw (9,0) -- (9,7);
\draw (10,2) -- (10,7);
\draw (11,3) -- (11,7);
\draw (13,5) -- (13,7);
\draw (14,6) -- (14,7);
\draw (8,0) -- (9,0);
\draw (8,1) -- (9,1);
\draw (8,2) -- (10,2);
\draw (8,3) -- (11,3);
\draw (8,4) -- (11,4);
\draw (8,5) -- (13,5);
\draw (8,6) -- (14,6);
\draw (8,7) -- (14,7);
\draw (12,5) -- (12,7);
\foreach \x/\y/\m in {+8.5/+0.5/$2$,8.5/1.5/$3$,8.5/2.5/$0$,9.5/2.5/$1$,9.5/3.5/$2$,10.5/3.5/$3$,10.5/4.5/$0$,10.5/5.5/$1$,11.5/5.5/$2$,12.5/5.5/$3$,12.5/6.5/$0$,13.5/6.5/$1$} 
    \node at (\x,\y) {\m};
\draw[pattern=north west lines, pattern color=red] (8,2) rectangle (9,3);
\draw[pattern=north west lines, pattern color=red] (8,6) rectangle (9,7);
\draw[pattern=north west lines, pattern color=red] (11,6) rectangle (12,7);
\draw[pattern=north west lines, pattern color=green] (9,4) rectangle (10,5);

\end{tikzpicture}
\end{center}

After removing the $4$ pieces of $4\con$ribbons, we obtain $\Core_4(\lambda)=(4,1)$. $\Quot_4(\lambda)=((1),(2,1),\emptyset,\emptyset)$, as shown in the above picture.

\end{Example}

\subsection{Two Equivalent Definitions of Mullineux Transpose}

We abbreviate the composition of Mullineux involution (see Definition \ref{RepMull}) and transpose as Mullineux transpose. Now we define the notion of good and co-good boxes as well as good and co-good sequence, which will be used to give the construction of Mullineux transpose with respect to some $b\in\mathbb{N}_{>1}$.

\begin{Definition}
\label{good}
A good box of residue $i$ where $i\in\{0,1,...,b-1\}$ of a partition $\lambda$ is defined through the following procedure:

First label the boxes on the boundary of $\lambda$ by their residues. Then moving from southwest to northeast, we produce a word by writing "R" for the removable boxes and "A" for addable boxes of residue $i$ (ignoring boxes in the boundary of other residues), thus obtaining a sequence, which is called an RA-sequence. Then we inductively cancel the consecutive "RA"'s until there is no "RA" appearing. Then the removable box of residue $i$ corresponding to first "R" from left is called a good box of residue $i$. If there are no "R" in the word after the cancellation, there is no good box of residue $i$.
\end{Definition}

\begin{Remark}
This definition is equivalent to Kleshchev's original definition in \cite{kleshchev1996branching}. It follows that for each residue $i=0,1,...,b-1$, there is at most one good box and Kleshchev proved there is always a good box of some residue.
\end{Remark}

\begin{Definition}
\label{cogood}
A co-good box of residue $i$ for $i\in\{0,1,...,b-1\}$ of a partition $\lambda$ is defined through the following procedure:

Label the boxes of the boundary of $\lambda$ by their residue and for a given residue $i$, write its corresponding RA sequence. Then we cancel the consecutive "AR"'s iteratively until there is no "AR" appearing. Now the removable box of residue $i$ corresponding to the first "R" from right is called an co-good box of residue $i$. (As with good boxes, for each $i\in \{0,1,...,b-1\}$ there exists at most one co-good box and there is at least one value of $i$ for which such a box exists.) 
\end{Definition}

\begin{Remark}
It is clear from the definitions that if $A=(i,j)$ is a good box for $\lambda$ with $\res A=k$ mod $b$, then $A'=(j,i)$ is a co-good box for $\lambda^{\Tr}$ with $\res A'=-k$, and vice versa. 
\end{Remark}

\begin{Definition}
For $\lambda$ a $b$-regular partition on $n$, a sequence $(r_1,...,r_n)\pmod{b}$ of residues is called \textit{good} (resp. \textit{co-good}) if\\
$\lambda$ has a good (resp. co-good) box $A_1$ of residue $r_1$,\\
$\lambda\setminus A_1$ has a good (resp. co-good) box $A_2$ of residue $r_2$,\\...\\
$\lambda\setminus\bigcup\limits_{i=1}^{n-1} A_i$ has a good (resp. co-good) box $A_n$ of residue $r_n$.

We call the sequence $A_1,...,A_n$ as the good (resp. co-good) decomposition sequence of $\lambda$.
\end{Definition}

Then we consider the Mullineux involution $\Mb$. Throughout the paper we will deal with the composition of Mullineux map with transpose $\MbT$ rather than $\Mb$ alone.

The following Theorem is a reformulation of \cite[Theorem 6.42]{mathas1999iwahori} due to Klechshev and Brundan, where it gives an combinatorial way to do Mullineux transpose, and $b$ is not necessarily restricted to be prime. More importantly, it gives an equivalent definition of Mullineux involution to Definition \ref{RepMull}.

\begin{Theorem}
\label{goodco-goodmt}

For any $b\con$regular partition $\lambda$, consider the following procedure:

\begin{enumerate}
\item Find a good box $A_1$ for $\lambda$, record its residue $\res A_1=r_1$ and delete the box from the partition to obtain a smaller partition $\lambda_1$. Repeat the above step $n$ times until we get the empty partition $\lambda_n=\emptyset$. Then by construction the sequence $(r_1,...,r_n)$ is a good sequence of $\lambda$.
\item Start with the empty partition 
$\mu_n=\emptyset$ and at step $i$ add the unique box $B_{n-i+1}$ to the partition $\mu_{n-i+1}$ such that $B_{n-i+1}$ is a co-good square of $\mu_{n-i+1}\cup B_{n-i+1}$ of residue $r_{n-i+1}$. Such a box can always be found uniquely. Label the resulting partition by $\mu=\mu_1\cup B_1$. 
\end{enumerate}
Then $\mu=\lambda^{\MbT}$.
\end{Theorem}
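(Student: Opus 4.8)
The plan is to read Theorem~\ref{goodco-goodmt} as a translation of \cite[Theorem~6.42]{mathas1999iwahori} into the language of good and co-good boxes, using the transpose relation recorded in the Remark after Definition~\ref{cogood} as the essential bridge. First I would recall that theorem in crystal-theoretic form, as established by Kleshchev and Brundan in \cite{kleshchev1996branching,ford1997proof}: the Mullineux involution $\Mb$ is induced by the residue automorphism $i\mapsto -i$, in the precise sense that removing the good box of $\lambda$ is intertwined by $\Mb$ with removing the good box of $\lambda^{\Mb}$, the two residues being negatives of one another. Consequently, if step~(1) produces the good sequence $(r_1,\dots,r_n)$ of $\lambda$, then the good sequence of $\lambda^{\Mb}$ is $(-r_1,\dots,-r_n)$, and the partitions obtained by stripping $\lambda$ correspond under $\Mb$ to those obtained by stripping $\lambda^{\Mb}$ with the negated residues. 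That step~(1) terminates at $\emptyset$ after exactly $n=|\lambda|$ removals is guaranteed by Kleshchev's existence of a good box for every nonempty $b$-regular partition, noted after Definition~\ref{good}.

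Next I would transpose. By the Remark relating good and co-good boxes under transposition, a good box of residue $k$ of a partition $\nu$ corresponds to a co-good box of residue $-k$ of $\nu^{\Tr}$, and deletion commutes with transposition. Applying this to $\nu=\lambda^{\Mb}$, whose good sequence is $(-r_1,\dots,-r_n)$, I obtain that the successive \emph{co-good} removals carrying $\lambda^{\MbT}=(\lambda^{\Mb})^{\Tr}$ down to $\emptyset$ have residues $(r_1,\dots,r_n)$; the two sign reversals, one from $\Mb$ and one from the transpose, cancel, which is exactly why step~(2) uses the \emph{same} residues $r_i$ rather than their negatives.

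It then remains to invert this co-good stripping. Adding the boxes back in the reverse order, with residues $r_n,r_{n-1},\dots,r_1$ and each new box required to be co-good of the prescribed residue in the enlarged partition, is precisely the recipe of step~(2), and yields $\mu=\lambda^{\MbT}$. The uniqueness asserted there (``such a box can always be found uniquely'') is the statement that co-good removal is an injective partial operation with a single-valued inverse on its image; this I would justify directly from the AR-cancellation defining co-good boxes, exactly dually to the corresponding fact for good boxes. The step I expect to require the most care is not the logical skeleton above but the bookkeeping: aligning the southwest-to-northeast reading and the residue convention $\res(i,j)=j-i$ used here with the conventions of \cite{mathas1999iwahori}, checking that the reverse-order indexing of the $B_{n-i+1}$ in step~(2) really is the inverse of the co-good removal sequence, and confirming at each stage that the relevant partitions lie in the domain on which good (resp.\ co-good) boxes are defined, so that the stripping and rebuilding operations compose as claimed.
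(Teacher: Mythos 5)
The paper itself offers no proof of Theorem \ref{goodco-goodmt}: it is stated as a reformulation of \cite[Theorem 6.42]{mathas1999iwahori} due to Kleshchev and Brundan, with the remark after Definition \ref{cogood} serving as the implicit bridge between good and co-good boxes. Your proposal is correct and reconstructs essentially that same derivation---Kleshchev's intertwining of good-box removal with $\Mb$ up to residue negation, transported through transposition to co-good removals of $\lambda^{\MbT}$, then inverted into the rebuilding procedure of step (2) via uniqueness of co-good addition---with the only caveat that for non-prime $b$ the intertwining property requires the Hecke-algebra version (Brundan), not just the symmetric-group references \cite{kleshchev1996branching,ford1997proof}, which is precisely why the paper attributes the result to both Kleshchev and Brundan.
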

\begin{Example}
Consider $\lambda=(5,4,2)$ and $b=4 $.
We label the boxes in $\lambda$ and their residues as follows:
\begin{center}
\begin{tikzpicture}[scale=0.6]
\draw (2,0) -- (0,0) -- (0,3);
\draw (1,0) -- (1,3);
\draw (2,0) -- (2,3);
\draw (3,1) -- (3,3);
\draw (0,1) -- (4,1) -- (4,3);
\draw (0,2) -- (5,2) -- (5,3) -- (0,3);

\draw (10,0) -- (8,0) -- (8,3);
\draw (9,0) -- (9,3);
\draw (10,0) -- (10,3);
\draw (11,1) -- (11,3);
\draw (8,1) -- (12,1) -- (12,3);
\draw (8,2) -- (13,2) -- (13,3) -- (8,3);

\foreach \x/\y/\m in {0.5/2.5/$A_{11}$,1.5/2.5/$A_9$,2.5/2.5/$A_8$,3.5/2.5/$A_6$,4.5/2.5/$A_1$,0.5/1.5/$A_{10}$,1.5/1.5/$A_7$,2.5/1.5/$A_5$,3.5/1.5/$A_4$,0.5/0.5/$A_{3}$,1.5/0.5/$A_{2}$,8.5/2.5/$0$,9.5/2.5/$1$,10.5/2.5/$2$,11.5/2.5/$3$,12.5/2.5/$0$,8.5/1.5/$3$,9.5/1.5/$0$,10.5/1.5/$1$,11.5/1.5/$2$,8.5/0.5/$2$,9.5/0.5/$3$} 
    \node at (\x,\y) {\m};
\end{tikzpicture}
\end{center}

By Definition \ref{good}, we decompose $\lambda$ as $$A_1,A_{2},A_{3},A_4,A_5,A_6,A_7,A_8,A_9,A_{10},A_{11},$$ with good sequence $(0,3,2,2,1,3,0,2,1,3,0)$.

\begin{center}
\begin{tikzpicture}[scale=0.6]
\draw (0,0) -- (1,0) -- (1,5);
\draw (0,1) -- (2,1) -- (2,5);
\draw (0,2) -- (2,2);
\draw (0,3) -- (2,3);
\draw (0,4) -- (4,4) -- (4,5) -- (0,5);
\draw (0,0) -- (0,5);
\draw (3,4) -- (3,5);

\draw (8,0) -- (9,0) -- (9,5);
\draw (8,1) -- (10,1) -- (10,5);
\draw (8,2) -- (10,2);
\draw (8,3) -- (10,3);
\draw (8,4) -- (12,4) -- (12,5) -- (8,5);
\draw (8,0) -- (8,5);
\draw (11,4) -- (11,5);

\foreach \x/\y/\m in {0.5/4.5/$B_{11}$,1.5/4.5/$B_9$,2.5/4.5/$B_3$,3.5/4.5/$B_2$,0.5/3.5/$B_{10}$,1.5/3.5/$B_7$,0.5/2.5/$B_8$,1.5/2.5/$B_6$,0.5/1.5/$B_5$,1.5/1.5/$B_{4}$,0.5/0.5/$B_{1}$,8.5/4.5/$0$,9.5/4.5/$1$,10.5/4.5/$2$,11.5/4.5/$3$,8.5/3.5/$3$,9.5/3.5/$0$,8.5/2.5/$2$,9.5/2.5/$3$,8.5/1.5/$1$,9.5/1.5/$2$,8.5/0.5/$0$} 
    \node at (\x,\y) {\m};
\end{tikzpicture}
\end{center}

Using Definition \ref{cogood} and the same sequence $(0,3,2,2,1,3,0,2,1,3,0)$ as a co-good sequence, partition $(4,2,2,2,1)$ can be rebuild using the co-good decomposition sequence $$B_{1},B_2,B_3,B_{4},B_5,B_6,B_7,B_8,B_9,B_{10},B_{11}.$$

Hence by Theorem \ref{goodco-goodmt}, $(5,4,2)^{\MfourT}=(4,2,2,2,1)$.

\end{Example}

Bessenrodt, Olsson and Xu introduced in \cite{bessenrodt1999properties} another equivalent definition of Mullineux transpose in the following way, which is used in Section \ref{secunique} of the monotonicity properties.

First we define the $b\con$rim of a $b\con$regular partition and the operator $\I$ of removing the $b\con$rim.
\begin{Definition}
For a $b\con$regular partition $\lambda$, the $b\con$rim of $\lambda$ is defined to be a subset of the rim consisting of the following pieces. Each piece, \textit{except possibly the last one}, contains $b$ boxes. We choose the first $b$ boxes from the rim, beginning with the rightmost box of the first row and moving southwestwards. If the last box of this piece is chosen from the $i_0\con$th row of $\lambda$, then we choose the second piece of $b$ boxes beginning with the rightmost box of the next row ${i_0+1}$. Continue this procedure until we reach the last piece ending in the last row. Define $\lambda^{\I}$ to be the partition obtained from $\lambda$ by removing its $b\con$rim.
\end{Definition}
Next we define an operator $\J$ for a $b$-regular partition $\lambda$. 
\begin{Definition}
\label{defJ}
Given $\lambda=(\lambda_1,...,\lambda_k)$, if $\lambda^{\I}=(\mu_1,...,\mu_k)$, where some of the $\mu_i$ in the end are allowed to be zero, and $\phi(\lambda)=|\lambda|-|\lambda^{\I}|$, define $$\lambda^{\J}\coloneqq(\mu_1+1,...,\mu_{k-1}+1,\mu_k+\delta)$$ where 
\[\delta=
\begin{cases}
0\text{ if } b\nmid \phi(\lambda)\\
1\text{ if } b\mid \phi(\lambda)
\end{cases}\]
\end{Definition}

Finally, the operator $\X_b$ for a $b\con$regular partition $\lambda$ is defined as $\lambda^{\X_b}\coloneqq(j_1,...,j_l)$, where 
$$j_i=|\lambda^{\J^{i-1}}|-|\lambda^{\J^i}|.$$

\begin{Proposition}[{\cite[Proposition 3.6]{bessenrodt1999properties}}]
For any $b\con$regular partition $\lambda$, we have
$$\lambda^{\Xb}=\lambda^{\MbT}.$$
\end{Proposition}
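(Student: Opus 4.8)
The plan is to prove the identity by induction on $|\lambda|$, exploiting the parallel recursive shape of the two constructions. Directly from the definition of $\Xb$ one has $\lambda^{\Xb}=(j_1,j_2,\dots)$ with $j_i=|\lambda^{\J^{i-1}}|-|\lambda^{\J^{i}}|$, and since $(\lambda^{\J})^{\J^{i-1}}=\lambda^{\J^{i}}$ the parts of $(\lambda^{\J})^{\Xb}$ are exactly $j_2,j_3,\dots$; thus $\lambda^{\Xb}$ is obtained by prepending $j_1$ to $(\lambda^{\J})^{\Xb}$. On the other side, transposing shows that deleting the first part of $\lambda^{\MbT}$ is the same as deleting the first column of $\lambda^{\Mb}$. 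Hence the proposition reduces to two claims: (a) the leading parts agree, $j_1=(\lambda^{\MbT})_1$; and (b) the operator $\J$ realizes the deletion of the first row of the Mullineux transpose, i.e. $(\lambda^{\J})^{\MbT}=((\lambda^{\MbT})_2,(\lambda^{\MbT})_3,\dots)$. Granting (a) and (b), the inductive hypothesis applied to $\lambda^{\J}$ gives $(\lambda^{\J})^{\Xb}=(\lambda^{\J})^{\MbT}$, and prepending $j_1$ reconstructs $\lambda^{\MbT}$ (so that monotonicity is automatic); the base case $\lambda=\emptyset$ is trivial.

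For claim (a) I would pass to Mullineux's symbol. Iterating removal of the $b\con$rim records a two-row array whose first column is $(a_1,r_1)=(\phi(\lambda),k)$, where $k$ is the number of rows of $\lambda$ and $a_1=\phi(\lambda)$ is the size of the $b\con$rim; since the $b\con$rim always reaches the last row, $r_1=k$. Mullineux's description of $\Mb$ replaces $r_1$ by $s_1=a_1-r_1+\epsilon_1$, with $\epsilon_1=1$ if $b\nmid a_1$ and $\epsilon_1=0$ if $b\mid a_1$, and $s_1$ is precisely the number of rows of $\lambda^{\Mb}$, hence the first part of $\lambda^{\MbT}$. Comparing with the count coming from Definition \ref{defJ}, namely $j_1=|\lambda|-|\lambda^{\J}|=\phi(\lambda)-(k-1)-\delta$, and observing that $\delta=1-\epsilon_1$, one obtains $j_1=\phi(\lambda)-k+\epsilon_1=s_1=(\lambda^{\MbT})_1$, which is claim (a).

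Claim (b) is where the real work lies, and I expect it to be the main obstacle. The operator $\J$ first strips the $b\con$rim (deleting the first column $(a_1,r_1)$ from the symbol, leaving the symbol of $\lambda^{\I}$) and then adds back a single column of height $k-1$ or $k$ according to $\delta$. The goal is to show that this "remove a $b\con$rim, then add a column'' operation corresponds, under the Mullineux involution, exactly to stripping the first column of $\lambda^{\Mb}$; equivalently $(\lambda^{\J})^{\Mb}=\lambda^{\Mb}\setminus(\text{first column})$, which transposes to claim (b). The delicate points are tracking how adjoining the new first column perturbs the row counts of the remaining $b\con$rims, and handling the boundary case in which the last row of $\lambda$ is entirely consumed by the $b\con$rim, a case precisely governed by the $\delta$ of Definition \ref{defJ}.

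A cleaner, more self-contained alternative is to bypass the symbol and argue through Theorem \ref{goodco-goodmt}: the co-good construction builds $\lambda^{\MbT}$ one box at a time, and one can attempt to match a full $\J$-pass (one $b\con$rim's worth of boxes) against the co-good boxes that assemble the first row of $\lambda^{\MbT}$, using the residue bookkeeping along the $b\con$rim. Either way, the heart of the argument is this compatibility of $\J$ with the row-by-row structure of the Mullineux transpose; once it is in hand, the induction of the first paragraph closes immediately.
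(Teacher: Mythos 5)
This proposition is not proved in the paper at all: it is quoted verbatim from Bessenrodt--Olsson--Xu (their Proposition 3.6), so your attempt has to be judged on its own merits rather than against an internal argument. On those merits, what you have written is a correct \emph{reduction}, not a proof. The inductive skeleton is fine: $\lambda^{\Xb}$ is indeed $j_1$ prepended to $(\lambda^{\J})^{\Xb}$, and claim (a) is correct arithmetic --- $j_1=\phi(\lambda)-(k-1)-\delta$, $\delta=1-\epsilon_1$, $r_1=k$, hence $j_1=a_1-r_1+\epsilon_1=s_1$ --- \emph{provided} one grants the Mullineux-symbol description of $\Mb$ (that the symbol of $\lambda^{\Mb}$ has bottom row $s_i=a_i-r_i+\epsilon_i$). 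Note that this description is not available from the paper's definitions (Definition \ref{RepMull} and Theorem \ref{goodco-goodmt}); it is the Mullineux conjecture in symbol form, proved by Ford--Kleshchev, and it is exactly the machinery Bessenrodt--Olsson--Xu build their proof on. Citing it is legitimate, but you should say so explicitly, since it is the deep external input.

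The genuine gap is claim (b), which you yourself flag as ``where the real work lies'' and then do not prove. Claim (b) --- that $(\lambda^{\J})^{\MbT}$ equals $\lambda^{\MbT}$ with its first part deleted --- is not a routine compatibility check: together with claim (a) it is essentially equivalent to the proposition itself, so as written your argument restates the theorem in a form amenable to induction rather than establishing it. Proving (b) requires showing how the Mullineux symbol of $\lambda^{\J}$ relates to that of $\lambda$ (first-column deletion) and, crucially, how first-column deletion on the partition side interacts with the symbol on the image side; this is the content of the lemmas preceding Proposition 3.6 in Bessenrodt--Olsson--Xu, and none of that analysis appears in your sketch. Two smaller omissions compound this: the inductive hypothesis is only applicable if $\lambda^{\J}$ is again $b\con$regular, which you never verify, and the alternative route via Theorem \ref{goodco-goodmt} (matching a $\J$-pass against the co-good boxes forming the first row of $\lambda^{\MbT}$) is only named, not carried out. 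Until claim (b) and the regularity of $\lambda^{\J}$ are actually proved, the induction does not close.
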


We extend the definition of Mullineux transpose to all partitions using the $b\con$regular decomposition in Definition \ref{irregular} as follows.

\begin{Definition}
\label{extMT}
The extended Mullineux transpose transformation $\mathcal{W}_b:\mathcal{P}\rightarrow\mathcal{P}$ is defined to be $$\lambda^{\mathcal{W}_b}\coloneqq (\nu^{\Mb}\cup b\star \mu^{\Tr})^{\Tr}$$ where $\lambda=\nu\cup b\star \mu$ is the $b\con$regular decomposition. In particular, if $\lambda$ is $b\con$regular, then $\lambda^{\mathcal{W}_b}=\lambda^{\MbT}$.
\end{Definition}

The following lemma is some basic properties of a core.

\begin{Lemma}
\label{RAlemma}
Let $\lambda$ be a $b\con$core, then
\begin{enumerate}
\item $\lambda^{\Tr}$ is also a $b\con$core;

\item The $b\con$rim and the rim of $\lambda$ coincide;

\item Given any residue in $\{0,1,...,b-1\}$, the RA-sequence of this residue contains only A's or R's.
\end{enumerate}
\end{Lemma}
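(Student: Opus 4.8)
The plan is to handle the three parts in increasing order of difficulty, using only the ribbon/hook-length description of $b$-cores together with the transpose.

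For (1), I would use that transposition is a size-preserving involution on Young diagrams carrying skew shapes to skew shapes and ribbons to ribbons: if $\lambda/\mu$ is a ribbon of length $b$ with $\mu\subseteq\lambda$, then $\lambda^{\Tr}/\mu^{\Tr}$ is again a ribbon of length $b$ with $\mu^{\Tr}\subseteq\lambda^{\Tr}$. Hence $\lambda$ admits a removable $b$-ribbon precisely when $\lambda^{\Tr}$ does, so one is a $b$-core if and only if the other is. (Equivalently, transposition fixes the multiset of hook lengths.)

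For (2), the key observation is that every piece in the construction of the $b$-rim \emph{begins} at the rightmost box of some row, i.e.\ at a \emph{hand} $(s,\lambda_s)$: this holds for the first piece (starting at $(1,\lambda_1)$) and, by the definition of the procedure, for each subsequent piece (starting at $(i_0+1,\lambda_{i_0+1})$). I would then show that for a $b$-core no full piece can \emph{end} strictly to the right of a down-step, i.e.\ at a box $(i_0,c)$ with $c>\lambda_{i_0+1}$. Such a terminal box is exactly a \emph{foot} (a column-bottom, since $(i_0+1,c)\notin\lambda$), so the piece would be the contiguous rim segment from the hand $(s,\lambda_s)$ to the foot $(i_0,c)$, which is the removable rim hook associated to $(s,c)$, of length $H_{s,c}$ equal to the piece size $b$; this is a ribbon of length $b$, contradicting the $b$-core hypothesis. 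Consequently every full piece ends exactly at a down-step box $(i_0,\lambda_{i_0+1})$, whose successor on the rim is precisely the starting box $(i_0+1,\lambda_{i_0+1})$ of the next piece; thus no rim box is skipped and the pieces tile the whole rim, with the terminal piece forced (by the same foot-argument) to be a proper tail of length $<b$ ending at $(k,1)$. Hence the $b$-rim coincides with the rim.

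For (3), suppose toward a contradiction that $\lambda$ has both a removable box $R=(i,\lambda_i)$ and an addable box $A=(i',\lambda_{i'}+1)$ of the same residue $r$; note first $i\ne i'$, since if $i=i'$ the two boxes are horizontally adjacent and their residues differ by $1$. Using (1) together with the fact that transposition preserves removability and addability while negating residues, I may, after replacing $\lambda$ by $\lambda^{\Tr}$ if necessary, assume $i<i'$. Then the addable condition forces $\lambda^{\Tr}_{\lambda_{i'}+1}=i'-1$, so the box $P=(i,\lambda_{i'}+1)$ lies in $\lambda$ with hand $R=(i,\lambda_i)$ and foot $(i'-1,\lambda_{i'}+1)$; a direct hook-length count gives $H_{P}=(\lambda_i-i)-(\lambda_{i'}+1-i')$, which is positive (as $i<i'$ and $\lambda_i\ge\lambda_{i'}+1$) and $\equiv 0\pmod b$ because $R$ and $A$ share the residue $r$. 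Thus $\lambda$ has a box whose hook length is a positive multiple of $b$, which is impossible for a $b$-core. Therefore removable and addable boxes of the same residue cannot coexist, so each RA-sequence consists entirely of R's or entirely of A's (the row-$1$ and new-row addable boxes being subsumed by the conventions $\lambda_0=\infty$, $\lambda_{k+1}=0$, and by transpose symmetry).

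The routine work is (1) and the bookkeeping in (2). The main obstacle is (3): organizing the case analysis so that the hand and foot of the relevant hook are correctly identified, and, crucially, invoking the classical equivalence between the ribbon definition of a $b$-core and the statement that no hook length is divisible by $b$ (James--Kerber), which is what upgrades the length-$mb$ hook produced in (3) into a genuine contradiction. I expect the cleanest writeup to record this hook-length characterization as a preliminary lemma; note that (2), by contrast, needs only the immediate consequence that a $b$-core has no hook of length exactly $b$.
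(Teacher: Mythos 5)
Your proposal is correct and follows essentially the same route as the paper: the heart of the matter, part (3), is proved there by exactly your argument, namely that a removable box and an addable box of the same residue span a hook of length divisible by $b$, contradicting the $b\con$core property (the paper treats the two orderings $\cdots\mathrm{A}\cdots\mathrm{R}\cdots$ and $\cdots\mathrm{R}\cdots\mathrm{A}\cdots$ directly with a picture, where you reduce to one case via transpose and part (1), and, like you, it implicitly relies on the classical fact that a $b\con$core has no hook length divisible by $b$, not merely none equal to $b$). The paper dismisses (1) and (2) as immediate from the definition of a core, so your rim-hook bookkeeping for (2) simply supplies details the authors chose to omit.
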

\begin{proof}
(1) and (2) are straightforward from the definition of a core.

(3)
\begin{center}
\begin{tikzpicture}
\draw[step=0.5cm,gray,very thin] (0.5,0) grid (15,4);
\draw[line width=1pt] (1,0.5) -- (2,0.5) -- (2,1) --(3,1) -- (3,1.5) --(4,1.5) -- (4,3) --(6.5,3) --(6.5,3.5) -- (1,3.5) -- (1,0.5);
\draw[line width=1pt] (9,0.5) -- (10,0.5) -- (10,1) --(11,1) -- (11,1.5) --(12,1.5) -- (12,3) --(14.5,3) --(14.5,3.5) -- (9,3.5) -- (9,0.5);
\draw[densely dotted] (3,1) -- (3.5,1) -- (3.5,1.5);
\draw (6,3) -- (6,3.5);
\fill[red!30!white] (3.02,1.01) rectangle (3.49,1.48);
\fill[black!30!white] (6.01,3.01) rectangle (6.49,3.48);
\draw[red] (3,1.5) -- (3,3.5);
\draw[red] (3.5,1.5) -- (3.5,3.5);
\draw[black] (3,3) -- (4,3);
\draw[pattern=north west lines, pattern color=blue] (3,1.5) rectangle (3.5,3.5);
\draw[pattern=north west lines, pattern color=blue] (3.5,3) rectangle (6.5,3.5);
\foreach \x/\y/\m in {+3.25/+1.25/A,6.25/3.25/R} 
    \node at (\x,\y) {\m};
\draw[densely dotted] (12,2.5) -- (12.5,2.5) -- (12.5,3);
\draw (9.5,0.5) -- (9.5,1) -- (10.5,1);
\fill[red!30!white] (12.02,2.51) rectangle (12.49,2.98);
\fill[black!30!white] (9.5,0.52) rectangle (9.98,1);
\draw[black] (9.5,0.5) -- (9.5,3) -- (12,3);
\draw[black] (12,2.5) -- (9.5,2.5);
\draw[black] (10,3) -- (10,1);
\draw[pattern=north west lines, pattern color=blue] (9.5,0.5) rectangle (10,3);
\draw[pattern=north west lines, pattern color=blue] (10,2.5) rectangle (12,3);
\foreach \x/\y/\m in {+9.75/+0.75/R,12.25/2.75/A} 
    \node at (\x,\y) {\m};
\end{tikzpicture}
\end{center}

If the RA sequence contains $\cdots \rm{A} \cdots\cdots \rm{R}\cdots$ or $\cdots \rm{R} \cdots\cdots \rm{A}\cdots$, then the specific hook corresponding to these addable and removable boxes has length divisible by $b$, which contradicts $\lambda$ be a $b\con$core.
\end{proof}

\subsection{Column Regularization}
For an arbitrary partition $\lambda$, \cite{james16representation} defined $b\con$regularization of $\lambda$ as sliding the boxes of $\lambda$ upwards on all ladders of  slope $-\frac{1}{p-1}$, and denote the resulting partition by $\lambda^{\reg_b}$, which is $b\con$regular. And column regularization $\lambda^{\Cr_b}$ is defined to be $((\lambda^{\Tr})^{\reg_b})^{\Tr}$. This is a special case of our generalized column regularization with two parameters.

\begin{Definition}
\label{colreg}
For two co-prime nonnegative integers $0<a<b$, we define a partial transformation $\Crab:\mathcal{P}\rightarrow\mathcal{P}$ as follows.

For any partition $\lambda$ (identifying as a set of integer points in the plane), ladders are defined as lines $$L_c: \; y+\frac{b-a}{a}x=\frac{c}{a}$$ 
with $c\in\mathbb{Z}$, and identify $L_c$ with the set of integer points on it. Denote $L_c^+=L_c\cap\{(x,y)|x>0,y>0\}$. For each $c\in\mathbb{Z}$, if $\lambda\cap L_c^+\neq\emptyset$, slide those boxes in the intersection down the ladder $L_c^+$ to the bottom. The resulting set of boxes is $\lambda^{\Crab}$, which may form a partition or not.
\end{Definition}
\begin{Remark}
To avoid confusion with the usual picture in mind, we restate the fact which is already mentioned in Section \ref{secpre} that our $x\con$axis is pointing southwards and $y\con$axis is pointing eastwards. This is because we need to be consistent with the notion of the coordinate of a box in the partition, where the first coordinate is the corresponding row index and the second being the column index. And this convention is used throughout the paper.

In fact, sliding a box on a ladder with parameter $a$, $b$ is to slide it $ta$ spaces down and $t(b-a)$ spaces to the left, where $t\in\mathbb{N}_{>0}$.
\end{Remark}
\begin{Remark}
\label{colregb}
The previous definition satisfies $\lambda^{\Cr_b}=\lambda^{\Cr_{1,b}}$.
\end{Remark}
\begin{Example}
$(3,2,2,1)^{\Cr_{2,3}}=(2,2,2,1,1)$ where box $C$ slides to where $B$ is and box $B$ slides down to the position $A$, as shown in the picture. 

\begin{center}
\begin{tikzpicture}[scale=0.5]
\draw (1,0) -- (0,0) -- (0,4);
\draw (1,0) -- (1,4);
\draw (2,1) -- (2,4);
\draw (3,3) -- (3,4);
\draw (0,1) -- (2,1);
\draw (0,2) -- (2,2);
\draw (0,3) -- (3,3);
\draw (0,4) -- (3,4);
\draw[densely dotted] (0,0) -- (0,-1) -- (1,-1) -- (1,0);
\draw[->] (3,3) -- (1,-1);
\draw (9,0) -- (8,0) -- (8,4);
\draw (9,0) -- (9,4);
\draw (10,1) -- (10,4);
\draw (8,1) -- (10,1);
\draw (8,2) -- (10,2);
\draw (8,3) -- (10,3);
\draw (8,4) -- (10,4);
\draw (8,0) -- (8,-1) -- (9,-1) -- (9,0);
\fill[black!30!white] (1.02,1.02) rectangle (1.98,1.98);
\fill[black!30!white] (2.02,3.02) rectangle (2.98,3.98);
\fill[black!30!white] (9.02,1.02) rectangle (9.98,1.98);
\fill[black!30!white] (8.02,-0.98) rectangle (8.98,-0.02);
\foreach \x/\y/\m in {0.5/-0.5/$A$,1.5/1.5/$B$,2.5/3.5/$C$,5.5/1.5/$\Longrightarrow$,8.5/-0.5/$B$,9.5/1.5/$C$} 
    \node at (\x,\y) {\m};
\end{tikzpicture}
\end{center}

However, after applying $\Cr_{2,3}$ to $(3,2,2)$, it is not a partition any more. The box $C$ slides to where $B$ is,  $B$ slides to position $A$, and $F$ slides to position $E$, shown in the picture below.

\begin{center}
\begin{tikzpicture}[scale=0.5]
\draw (0,1) -- (0,4);
\draw (1,1) -- (1,4);
\draw (2,1) -- (2,4);
\draw (3,3) -- (3,4);
\draw (0,1) -- (2,1);
\draw (0,2) -- (2,2);
\draw (0,3) -- (3,3);
\draw (0,4) -- (3,4);
\draw[densely dotted] (0,1) -- (0,-1) -- (1,-1) -- (1,1);
\draw[densely dotted] (0,0) -- (1,0);
\fill[black!30!white] (1.02,1.02) rectangle (1.98,1.98);
\fill[black!30!white] (2.02,3.02) rectangle (2.98,3.98);
\fill[black!30!white] (9.02,1.02) rectangle (9.98,1.98);
\fill[black!30!white] (8.02,-0.98) rectangle (8.98,-0.02);
\fill[brown!30!white] (1.02,2.02) rectangle (1.98,2.98);\fill[brown!30!white] (8.02,0.02) rectangle (8.98,0.98);\draw[->] (3,3) -- (1,-1);
\draw[->,brown] (2,2) -- (1,0);
\foreach \x/\y/\m in {0.5/-0.5/$A$,1.5/1.5/$B$,2.5/3.5/$C$,1.5/2.5/$F$,0.5/0.5/$E$,5.5/1.5/$\Longrightarrow$,8.5/-0.5/$B$,9.5/1.5/$C$,8.5/0.5/$F$} 
    \node at (\x,\y) {\m};
\draw (9,0) -- (8,0) -- (8,4);
\draw (9,0) -- (9,4);
\draw (10,1) -- (10,2);
\draw (10,3) -- (10,4);
\draw (8,1) -- (10,1);
\draw (8,2) -- (10,2);
\draw (8,3) -- (10,3);
\draw (8,4) -- (10,4);
\draw (8,0) -- (8,-1) -- (9,-1) -- (9,0);
\end{tikzpicture}
\end{center}

\end{Example}

\section{Two Series of Transformations and the Main Theorem}
\label{mainsection}
We first define the combinatorial wall-crossing transformation due to Bezrukavnikov and Losev. For $\lambda$ being a partition of $n$, consider the Farey sequence $F_n$ which is a set of reduced fractions between $0$ and $1$ with denominator at most $n$. Each $\frac{a}{b}\in F_n$ is called a wall. Every two consecutive elements $\frac{a_i}{b_i}$ and $\frac{a_{i+1}}{b_{i+1}}$ (reduced fractions) of this sequence define an interval $I=\left[\frac{a_i}{b_i}, \frac{a_{i+1}}{b_{i+1}}\right]\subset [0,1]$. We define the wall-crossing transformation at the wall $\frac{a}{b}\in F_n$, where $\frac{a}{b}$ is reduced, to be the extended Mullineux transpose $\mathcal{W}_b:\mathcal{P}\rightarrow\mathcal{P}$ given in Definition \ref{extMT}. Now we compose them in the following way:

\begin{Definition}
\label{CWC}
Fix a positive integer $n$. We define a collection of maps $B_I:\mathcal{P}_n\rightarrow\mathcal{P}_n$ where $I$ are all intervals whose endpoints are consecutive rational numbers in $F_n$.

For the first interval, $B_{\left[0,\frac{1}{n}\right]}(\lambda)=\lambda$ for every $\lambda\in \mathcal{P}$. Inductively, suppose we already defined $B_I$ where $I=\left[\frac{a_{i-1}}{b_{i-1}}, \frac{a_i}{b_i}\right]$. Suppose the next interval is $I^\prime=\left[\frac{a_{i}}{b_{i}}, \frac{a_{i+1}}{b_{i+1}}\right]$, we define $B_{I^\prime}(\lambda)\coloneqq B_I(\lambda)^{\mathcal{W}_{b_i}}$. Also we define an integer function $D_I(\lambda)\coloneqq b_i\cdot|\Irr_{b_i}(B_{I}(\lambda))|$ when $I=\left[\frac{a_{i-1}}{b_{i-1}}, \frac{a_i}{b_i}\right]$.
\end{Definition}

\begin{Remark}
\label{Bbij}
For each interval $I$, $B_I:\mathcal{P}_n\rightarrow\mathcal{P}_n$ is a bijection.
\end{Remark}

In fact, we can consider the process of starting with any $\lambda$ in $\left[0,\frac{1}{n}\right]$, and do a series of wall-crossing transformations $\mathcal{W}_b$, then $\{B_I(\lambda)\}_I$ gives a series of partitions, one in each interval and one is obtained from the previous one by crossing a wall via combinatorial wall-crossing.

Moreover, consider another procedure where we begin with $\lambda$ in the first interval, and cross the wall $\frac{a}{b}\in F_n$ by performing the generalized column regularization $\Cr_{a,b}$ to the partition in the previous interval. We denote the corresponding partition in $I$ by $\widetilde{B_{I}}(\lambda)$. Since $\Cr_{a_i,b_i}$ is only a partial transformation, at the moment we cannot guarantee the validity of doing such a process throughout the unit interval. But fortunately we have the following lemma which guarantees $\widetilde{B_{I}}(\lambda)$ is well defined.

\begin{Lemma}
\label{valid}
If we start with any partition $\lambda$ of $n$, at each step of the second procedure (performing generalized column regularization), we have $\widetilde{B_{I}}(\lambda)\in\mathcal{P}_n$. In particular, $\widetilde{B_{\left[\frac{n-1}{n},1\right]}}(\lambda)=(1^n)$.
\end{Lemma}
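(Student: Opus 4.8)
The plan is to reduce the entire lemma to a single local statement about crossing one wall. The first observation is that $\Cr_{a,b}$, viewed as an operation on a set of boxes, is idempotent: once each ladder's boxes have been slid to the bottom, a second application moves nothing. Hence, whatever $\widetilde{B_I}(\lambda)$ is, it is \emph{$\Cr_{a,b}$-fixed}, meaning that on every ladder of $\Cr_{a,b}$ its boxes occupy the bottommost positions, where $\frac{a}{b}$ is the wall just crossed. Granting the validity claim (that each $\widetilde{B_I}(\lambda)$ is a genuine partition), the final value is then immediate: at the last wall $\frac{n-1}{n}$ the ladders are the lines $i+(n-1)j=\mathrm{const}$, whose bottom box has $j=1$. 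If an $\Cr_{n-1,n}$-fixed partition of $n$ contained a box $(i_0,j_0)$ with $j_0\ge 2$, fixedness would force the bottom box $(i_0+(n-1)(j_0-1),1)$ of its ladder to be present, putting at least $n$ boxes in the first column and leaving no room for $(i_0,j_0)$. Thus $(1^n)$ is the unique $\Cr_{n-1,n}$-fixed partition of $n$, which gives $\widetilde{B_{[\frac{n-1}{n},1]}}(\lambda)=(1^n)$.

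It therefore remains to prove validity, which I would do by induction on the walls of $F_n$ taken in increasing order. The base case is the first crossing, at $\frac{1}{n}$: here $\Cr_{1,n}=\Cr_{n}$ is the classical column regularization by Remark \ref{colregb}, which sends any partition to a partition, so $\widetilde{B}_{[\frac1n,\frac1{n-1}]}(\lambda)$ is well defined for arbitrary $\lambda$. For the inductive step, suppose $\mu:=\widetilde{B_I}(\lambda)$ is a partition and is $\Cr_{a,b}$-fixed, where $\frac{a}{b}$ is the current wall, and let $\frac{c}{d}$ be its successor in $F_n$. Everything then reduces to the following local claim: \textbf{(Crux Lemma)} if $\mu$ is a partition which is $\Cr_{a,b}$-fixed and $\frac{a}{b}<\frac{c}{d}$ are consecutive in $F_n$, then $\mu^{\Cr_{c,d}}$ is again a partition (hence, by idempotence, $\Cr_{c,d}$-fixed, closing the induction).

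To prove the Crux Lemma I would exploit the arithmetic of Farey neighbours: consecutive fractions satisfy $bc-ad=1$. Writing the two sliding directions as $\vec u=(a,a-b)$ for $\frac{a}{b}$ and $\vec v=(c,c-d)$ for $\frac{c}{d}$, one computes $\det(\vec u,\vec v)=bc-ad=1$, so $\{\vec u,\vec v\}$ is a $\mathbb{Z}$-basis of the box lattice. Passing to the coordinates $q=V_{a,b}(i,j)=(b-a)i+aj$ and $p=V_{c,d}(i,j)=(d-c)i+cj$ (an integral, determinant-one change of variables, with $q$ constant along $\vec u$ and $p$ constant along $\vec v$), the ladders of $\Cr_{a,b}$ become the lines $q=\mathrm{const}$ and those of $\Cr_{c,d}$ the lines $p=\mathrm{const}$. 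In these coordinates $\Cr_{a,b}$-fixedness says each line $q=\mathrm{const}$ is packed to one end, while applying $\Cr_{c,d}$ packs each line $p=\mathrm{const}$ to the other end; the Young-diagram closure conditions ("box above" and "box to the left") likewise translate into two explicit lattice moves built from the same determinant-one data. The claim then becomes a finite, elementary check that repacking a one-end-packed configuration in the transverse direction preserves the staircase closure, and the unimodularity $bc-ad=1$ is exactly what makes the two packings interlock without opening a gap.

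I expect the Crux Lemma to be the main obstacle: the reduction, the base case, and the final-value computation are short, but verifying that the second packing cannot destroy the staircase shape requires controlling how the boxes of a single $\Cr_{c,d}$-ladder are distributed among the $\Cr_{a,b}$-ladders. The unimodular relation $bc-ad=1$, together with $b+d>n$ (which guarantees no further wall interposes between $\frac{a}{b}$ and $\frac{c}{d}$ in $F_n$), is precisely the input that makes this interlocking work. The example $(3,2,2)^{\Cr_{2,3}}$, which fails to be a partition exactly because $(3,2,2)$ is \emph{not} $\Cr_{1,2}$-fixed, shows that the $\Cr_{a,b}$-fixed hypothesis cannot be dropped, and so the induction must carry this invariant throughout.
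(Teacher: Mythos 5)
Your reduction to a single-wall statement, your base case, and your endgame are fine --- in particular, identifying $(1^n)$ as the unique $\Cr_{n-1,n}$-fixed partition of $n$ is a clean alternative to the paper's closing remark that ``all slopes except $0$ have been removed.'' But the proof has a genuine gap: your Crux Lemma is never proved, and it is the entire mathematical content of Lemma \ref{valid}. The paper's proof lives exactly where you stop. It does not run an induction on ladder-fixedness at all; instead it carries the quantitative invariant
$$\max_{\square\in\widetilde{B_{I^\prime}}(\lambda)}\frac{a_\square}{l_\square+1}<\frac{b^\prime-a^\prime}{a^\prime},$$
justified by the cumulative effect of \emph{all} previous regularizations (not just the last one), and then, from a hypothetical failure of partition-hood after crossing $\frac{a}{b}$, extracts a removable box $A$ and an addable position $\overline{B}$ whose combined hook satisfies $-\Slope(A,\overline{B})>\frac{b-a}{a}$, hence $\geq\frac{b^\prime-a^\prime}{a^\prime}$ by Farey adjacency, contradicting the invariant. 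Your sketch (unimodular coordinates, ``a finite, elementary check'') replaces this argument with an assertion; nothing in the proposal actually verifies that repacking in the transverse direction preserves the staircase shape.

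Worse, your inductive hypothesis is provably weaker than the paper's invariant, so you cannot close the gap by quoting the paper's one-wall argument as a black box. Take $\mu=(6,3,3)$ with $n=12$ and $\frac{a}{b}=\frac{2}{5}$: the $\Cr_{2,5}$-ladders move boxes down $2$ and left $3$, each ladder meets $\mu$ in at most two positions, and the only boxes with a ladder-successor in the quadrant are $(1,4),(1,5),(1,6)$, whose successors $(3,1),(3,2),(3,3)$ all lie in $\mu$; hence $\mu$ is $\Cr_{2,5}$-fixed. Yet its corner box has arm $5$ and leg $2$, so $\frac{a_{11}}{l_{11}+1}=\frac{5}{3}>\frac{3}{2}=\frac{b-a}{a}$, i.e.\ fixedness alone does not imply the inequality on which the paper's contradiction rests. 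This example does not refute your Crux Lemma (here the next wall in $F_{12}$ is $\frac{5}{12}$, and $\Cr_{5,12}$ moves nothing in $\mu$, precisely because the size bound $d\leq n<b+d$ keeps every box too close to the wall of the quadrant), but it shows that any proof of the Crux Lemma from fixedness alone must make essential, quantitative use of $|\mu|=n<b+d$ to control which boxes can move at all --- it cannot proceed by re-deriving the paper's arm--leg invariant, since that invariant simply fails for $\Cr_{a,b}$-fixed partitions in general. The ``elementary check'' you defer is therefore not routine; it is the whole problem, and as written the proposal does not solve it.
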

\begin{proof}
Suppose at some step of the process, $\widetilde{B_{I}}(\lambda)\notin\mathcal{P}_n$. Denote $I^\prime=\left[\frac{a^\prime}{b^\prime},\frac{a}{b}\right]$ to be the previous interval. First of all $I^\prime$ cannot be the first interval $\left[0,\frac{1}{n}\right]$ since by Remark \ref{colregb} and \cite{james16representation}, $\lambda^{\Cr_b}=\lambda^{\Cr_{1,b}}$ is always well-defined.

Then there is a box $\overline{A}=(i,j)\in\widetilde{B_{I}}(\lambda)$, but the box on top of it $B=(i-1,j)\notin\widetilde{B_{I}}(\lambda)$. This situation happens only when $\overline{A}$ comes from some $A\in \widetilde{B_{I^\prime}}(\lambda) $ with $\Slope(A,\overline{A})=-\frac{b-a}{a}$. Also $B\in \widetilde{B_{I^\prime}}(\lambda) $ and slides down to some $\overline{B}\in \widetilde{B_{I}}(\lambda) $ with $\Slope(B,\overline{B})=-\frac{b-a}{a}$.

Next we know 
$$\max_{\square\in\widetilde{B_{I^\prime}}(\lambda)}\frac{a_\square}{l_\square+1}<\frac{b^\prime-a^\prime}{a^\prime}.$$
This is immediate from the definition of column regularization, where we perform $\Cr_{c_k,d_k}$ in order to the initial partition $\lambda$, with $\frac{c_1}{d_1}=\frac{1}{n}<...<\frac{c_m}{d_m}=\frac{a^\prime}{b^\prime}$. After each $\Cr_{c_k,d_k}$, the ratio of arm length and leg length plus one must be strictly smaller than $\frac{d_k-c_k}{c_k}$ because all the possible shallower slopes are removed in previous steps. Hence the inequality is true since $m\geq1$.

Therefore $A$ and $B$ are removable boxes and $\overline{A}$ and $\overline{B}$ are addable boxes in $\widetilde{B_{I^\prime}}(\lambda)$. If not, then we are able to find some removable box $C$ southeast to $A$ (resp. $B$) and some addable $\overline{C}$ northwest to $\overline{A}$ (resp. $\overline{B}$) such that $-\Slope(C,\overline{C})>\frac{b-a}{a}$, i.e. $-\Slope(C,\overline{C})\geq\frac{b^\prime-a^\prime}{a^\prime}$, which is a contradiction. Say $A$ and $\overline{A}$ correspond to a box with arm length $t(b-a)$ and leg length $ta-1$; $B$ and $\overline{B}$ correspond to a box with arm length $t^\prime(b-a)$ and leg length $t^\prime a-1$.

\begin{center}
\begin{tikzpicture}
\draw[step=0.5cm,gray,very thin] (0,0.5) grid (3.5,3.5);
\fill[red!30!white] (0.5,0.5) rectangle (1,1);
\fill[red!30!white] (1.5,1.5) rectangle (2,2);
\fill[black!30!white] (1.5,2) rectangle (2,2.5);
\fill[black!30!white] (2.5,3) rectangle (3,3.5);
\foreach \x/\y/\m in {+0.75/+0.75/$\overline{B}$,+1.75/2.25/$B$,+1.75/+1.75/$\overline{A}$,+2.75/+3.25/$A$} 
    \node at (\x,\y) {\m};
    \draw (1,0.5) -- (2,2);
    \draw (2,1.5) -- (3,3);
    \draw (1,0.5) -- (3,3);
\end{tikzpicture}
\end{center}

But now we have $$-\Slope(A,\overline{B})=\frac{t(b-a)+t^\prime(b-a)}{ta+t^\prime a-1}>\frac{b-a}{a}.$$
Hence $A$ and $\overline{B}$ correspond to a box in $\widetilde{B_{I^\prime}}(\lambda)$ with the ratio of arm and leg plus one larger or equal to $\frac{b^\prime-a^\prime}{a^\prime}$, which contradicts the above inequality.

Using similar argument as above, when we arrive at the last interval $\left[\frac{n-1}{n},1\right]$, all possible slopes are removed except the steepest slope $0$, hence $\widetilde{B_{\left[\frac{n-1}{n},1\right]}}(\lambda)=(1^n)$.
\end{proof}

In Step 1 of the proof of Theorem \ref{main}, we provide a simpler proof to Lemma \ref{valid} in case of $\lambda=(n)$. What we are going to present is that when $\lambda$ is the one-row partition $(n)$, the above two procedures have the same effect. For simplicity, from now on, we will denote 
$$\lambda_I\coloneqq B_I((n)),$$
 $$\widetilde{\lambda_I}\coloneqq \widetilde{B_I}((n)).$$

\begin{Example}
In case of $n=7$, the Farey sequence is 
$$\frac{1}{7},\frac{1}{6},\frac{1}{5},\frac{1}{4},\frac{2}{7},\frac{1}{3},\frac{2}{5},\frac{3}{7},\frac{1}{2},\frac{4}{7},\frac{3}{5},\frac{2}{3},\frac{5}{7},\frac{3}{4},\frac{4}{5},\frac{5}{6},\frac{6}{7} ,$$and we start with $(7)$ in $\left[0, \frac{1}{7}\right]$.

Then either of the above two procedures give the same sequence of partitions in each interval, as shown in the following table (for simplicity, if the partitions in consecutive intervals are the same, we just write them once by pointing out the union of those small intervals):

\renewcommand\arraystretch{1.5}
\begin{table}[!htb]
\centering
\caption {n=7, starting with $(n)$}
\begin{tabular}{|c|c|c|c|c|c|c|c|c|}
\hline
Intervals & $\left[0,\frac{1}{7}\right]$ & $\left[\frac{1}{7},\frac{1}{5}\right]$ & $\left[\frac{1}{5},\frac{1}{3}\right]$ & $\left[\frac{1}{3},\frac{1}{2}\right]$ & $\left[\frac{1}{2},\frac{2}{3}\right]$ & $\left[\frac{2}{3},\frac{4}{5}\right]$ & $\left[\frac{4}{5},\frac{6}{7}\right]$ & $\left[\frac{6}{7},1\right]$ \\
\hline
Partitions & $(7)$ & $(6,1)$ & $(5,2)$ & $(4,2,1)$ & $(3,2,1^2)$ & $(2^2,1^3)$ & $(2,1^5)$ & $(1^7)$ \\
\hline
\end{tabular}
\end{table} 
\end{Example}

Now for the combinatorial wall-crossing operation, we denote the endpoints where the partition is not identical in the consecutive two intervals (sharing the endpoints) by $p_0=0<p_1<...<p_s$, and call them breaks for the combinatorial wall-crossing. And denote $\lambda_k$ to be the partition in $\left[p_{k-1},p_k\right]$. Similarly, we denote $q_0=0<q_1<...<,q_t$ to be the breaks of column regularization. And let $\widetilde{\lambda_k}$ be the partition in $[q_{k-1},q_k]$ under a series of column regularization.

\begin{Theorem}[Main result of the paper]
\label{main}
Using the above notation, we have 
\begin{enumerate}
\item $s=t$ and $$p_k=q_k=\min_{(i,j)\in\lambda_k}\frac{l_{ij}+1}{H_{ij}}=\max_{(i,j)\in\lambda_{k+1}}\frac{l_{ij}}{H_{ij}}.$$

\item $\lambda_k=\widetilde{\lambda_k}$ for all $k$, i.e. the two operations are exactly the same when we start with the row partition $(n)$.
\end{enumerate}
\end{Theorem}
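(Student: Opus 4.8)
The plan is to prove both parts simultaneously by induction on the interval index $k$, carrying along the auxiliary statement that the two procedures have produced the \emph{same} partition up to that point. Concretely, I maintain the inductive hypothesis $P(k)$: the partition $\mu:=\lambda_k=\widetilde{\lambda_k}$ is common to both procedures, the previous breaks agree ($p_{k-1}=q_{k-1}$), and $\max_{(i,j)\in\mu}\frac{l_{ij}}{H_{ij}}=p_{k-1}$. The base case $k=1$ is immediate, since $\lambda_1=\widetilde{\lambda_1}=(n)$, whose boxes all have $l_{ij}=0$, so that maximum is $0=p_0$. Throughout I will freely use the core and quotient (Definitions \ref{haimancore}, \ref{haimanquotient}), the good/co-good recipe for Mullineux transpose (Theorem \ref{goodco-goodmt}), the column regularization $\Crab$ (Definition \ref{colreg}), and the validity statement Lemma \ref{valid} for the $\widetilde{\lambda}$ side.

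\emph{Locating the next break.} Set $\frac ab:=\min_{(i,j)\in\mu}\frac{l_{ij}+1}{H_{ij}}$ in lowest terms. The key elementary observation is that, under $P(k)$, every box satisfies both $\frac{l_{ij}}{H_{ij}}\le p_{k-1}$ and $\frac{l_{ij}+1}{H_{ij}}\ge\frac ab$. Hence if some hook length were divisible by a denominator $d$ of a wall $\frac cd$ lying strictly between $p_{k-1}$ and $\frac ab$, then writing $H_{ij}=td$ would force $\frac{l_{ij}}{td}<\frac cd<\frac{l_{ij}+1}{td}$, i.e.\ the integer $tc$ would lie strictly between $l_{ij}$ and $l_{ij}+1$, which is impossible. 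Therefore $\mu$ has no hook length divisible by $d$, so $\mu$ is a $d$-core for every such wall. I then invoke (as a separate lemma, proved from Lemma \ref{RAlemma} on the Mullineux side and from the ladder/hook dictionary on the regularization side) that \emph{both} $\mathcal{W}_d$ and $\Cr_{c,d}$ fix every $d$-core; consequently neither procedure changes $\mu$ on any wall in $(p_{k-1},\frac ab)$. Applying the same inequality at $d=b$ shows moreover that at $\frac ab$ the partition $\mu$ is \emph{not} a $b$-core, so both procedures move it there. This yields the first half of part (1): $p_k=q_k=\frac ab=\min_{(i,j)\in\mu}\frac{l_{ij}+1}{H_{ij}}$.

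\emph{Agreement at a break.} The crux, and the main obstacle, is to show $\mu^{\mathcal{W}_b}=\mu^{\Crab}$. Rerunning the inequality argument at $d=b$ pins down the shape of the relevant boxes: every box with $H_{ij}$ divisible by $b$ must satisfy $l_{ij}+1=ta$ and $H_{ij}=tb$, hence arm $t(b-a)$ and leg $ta-1$ for some $t\ge1$. This rigidity forces the $b$-quotient $\Quot_b(\mu)$ to be \emph{thin} (each component a single row) and constrains exactly where its boxes sit relative to the ladders $L_c$ of slope $-\frac{b-a}{a}$. I will exploit this twice. On the Mullineux side, $\mathcal{W}_b=\Mb\Tr$ transposes the $b$-core and reverses/conjugates the quotient coordinates, and I must verify this from Theorem \ref{goodco-goodmt} (or the $\X_b$ description) specialized to a thin quotient. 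On the regularization side, I must show that collapsing each ladder $L_c^+$ induces, via the abacus presentation of the core and quotient, the \emph{same} transformation of $(\Core_b(\mu),\Quot_b(\mu))$. Matching these two box-by-box gives $\mu^{\mathcal{W}_b}=\mu^{\Crab}=:\nu$, and since $\mu$ is not a $b$-core, $\nu\neq\mu$, so the break is genuine. Reconciling the representation-theoretic Mullineux recipe with the geometric ladder-collapse is where essentially all the work lies.

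\emph{Closing the induction.} Finally I verify that $\nu$ satisfies $\max_{(i,j)\in\nu}\frac{l_{ij}}{H_{ij}}=\frac ab$: the ladder-collapse lowers every realized value of $\frac{l_{ij}}{H_{ij}}$ to at most $\frac ab$ while producing at least one box attaining $\frac ab$ exactly. This supplies the second displayed equality of part (1) and re-establishes the hypothesis of $P(k+1)$, completing the induction step. The induction runs until $\nu=(1^n)$, where every $\frac{l_{ij}+1}{H_{ij}}$ equals $1$ so no further wall in $F_n$ is a break; this terminal value agrees with $\widetilde{B_{\left[\frac{n-1}{n},1\right]}}((n))=(1^n)$ from Lemma \ref{valid}, forcing the two break sequences to terminate together and hence $s=t$. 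Combining all steps gives $\lambda_k=\widetilde{\lambda_k}$ for every $k$ together with both break formulas, which is the theorem.
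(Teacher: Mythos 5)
There is a genuine gap, in fact two, and they sit exactly at the two load-bearing points of your outline.

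First, your ``separate lemma'' that $\Cr_{c,d}$ fixes every $d$-core is false. Take $\mu=(2)$, which is a $3$-core, and the wall $\frac{2}{3}$: the ladder through the box $(1,2)$ (in the paper's coordinates, the line $x+2y=5$) meets the positive quadrant at $(1,2)$ and $(3,1)$, so $\Cr_{2,3}$ slides $(1,2)$ down to $(3,1)$ and the image is not even a partition, let alone equal to $\mu$. Being a $d$-core is what your hook-divisibility computation yields (and it does settle the Mullineux side via Proposition \ref{MTcore}), but it is strictly \emph{weaker} than the two-sided inequality $(*)$ that your induction actually provides; the paper proves $\lambda_{k+1}^{\Crab}=\lambda_{k+1}$ directly from $(*)$ by a removable/addable-box slope argument, not from the core property. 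Your step ``neither procedure changes $\mu$ on any wall in $(p_{k-1},\frac ab)$'' therefore rests on an invalid reduction, even though the hypotheses needed to repair it are available to you.

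Second, the crux --- $\mu^{\mathcal{W}_b}=\mu^{\Crab}$ at the break --- is not proved but only planned, and the plan relies on claims that are not available. The assertion that Mullineux transpose ``transposes the $b$-core and reverses/conjugates the quotient coordinates'' is not a usable input: the behavior of the Mullineux map on quotients is precisely the hard content here, and your structural claim that the quotient is ``thin (each component a single row)'' is incorrect --- in this situation $\Quot_b(\mu)$ is a single \emph{rectangle} $(h_2^{h_1})$ concentrated in one component, a fact the paper derives in Section \ref{secquotient} as a \emph{corollary} of the main theorem, not as an ingredient of its proof. The paper's actual mechanism is different and is what your sketch is missing: one first shows that all boxes moved by $\Crabk$ lie on a \emph{single} ladder $L_*^{k+1}$ (Step 4), that deleting them leaves a $b_{k+1}$-core $\tilde{\lambda}$ (Step 3), and that $\lambda_{k+1}$ is $b_{k+1}$-regular (Step 5, a point your induction never addresses, though it is needed even to identify $\mathcal{W}_{b_{k+1}}$ with the genuine Mullineux transpose $\MkT$ rather than the extended map). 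With these facts, Theorem \ref{goodco-goodmt} finishes the job: the good decomposition of $\lambda_{k+1}$ consists of the ladder boxes followed by a good decomposition of $\tilde{\lambda}$, the latter is simultaneously co-good because $\tilde{\lambda}$ is a core, and the co-good rebuilding deposits the ladder boxes at the bottom of the same ladder --- which is verbatim the ladder collapse $\Crabk$. Without the single-ladder and core statements, your proposed box-by-box matching of abacus data has nothing to match.
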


Before proving Theorem \ref{main}, we state the following result from \cite{bessenrodt1999properties}.

\begin{Proposition}[\cite{bessenrodt1999properties}]
\label{MTcore}
For a $b\con$regular partition, $\lambda^{\MbT}=\lambda$ if and only if $\lambda$ is a $b\con$core.
\end{Proposition}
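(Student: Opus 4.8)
The plan is to reduce everything to the operator $\Xb$ of Definition~\ref{defJ}: since $\lambda^{\Xb}=\lambda^{\MbT}$ for $b$-regular $\lambda$ (the cited Proposition~3.6 of \cite{bessenrodt1999properties}), it suffices to show that a $b$-regular $\lambda$ satisfies $\lambda^{\Xb}=\lambda$ exactly when $\lambda$ is a $b$-core. I would encode $\lambda=(\lambda_1,\dots,\lambda_k)$ by its first-column hook lengths $\beta_i:=H_{i1}=\lambda_i+(k-i)$, forming the beta-set $B(\lambda)=\{\beta_1>\dots>\beta_k\}$, and use the standard fact that $\lambda$ is a $b$-core (Definition~\ref{haimancore}) if and only if $B(\lambda)$ is closed under subtracting $b$, i.e.\ $\beta\in B(\lambda)$ and $\beta\ge b$ imply $\beta-b\in B(\lambda)$. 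Two elementary observations drive the argument. First, whenever the $b$-rim of $\lambda$ coincides with its ordinary rim, a direct count gives $\phi(\lambda)=\lambda_1+k-1=\beta_1$ and $\lambda^{\J}=(\lambda_2,\dots,\lambda_k,\delta)$ with $\delta=1$ iff $b\mid\beta_1$; in particular $\lambda^{\J}$ is then just $\lambda$ with its first row deleted whenever $\delta=0$. Second, the defining formula $j_i=|\lambda^{\J^{i-1}}|-|\lambda^{\J^i}|$ yields at once the shift relation $(\lambda^{\J})^{\Xb}=(j_2,j_3,\dots)$, so that $\Xb$ intertwines $\J$ with deletion of the first part.

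For the forward implication I would argue as follows. If $\lambda$ is a $b$-core, then by Lemma~\ref{RAlemma}(2) its $b$-rim coincides with its rim, and moreover no $\beta_i$ is divisible by $b$: otherwise closure of $B(\lambda)$ lets one descend $\beta_i,\beta_i-b,\beta_i-2b,\dots$ within $B(\lambda)$ down to $0$, which cannot be a beta-number since $\beta_k=\lambda_k\ge 1$. Hence $\delta=0$, so $\lambda^{\J}=(\lambda_2,\dots,\lambda_k)$ has beta-set $B(\lambda)\setminus\{\beta_1\}$, which is again closed; thus $\lambda^{\J}$ is a $b$-core and the same description applies at every iterate. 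Therefore $\lambda^{\J^{i}}=(\lambda_{i+1},\dots,\lambda_k)$, whence $j_i=\lambda_i$ and $\lambda^{\Xb}=\lambda$.

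For the converse I would induct on $|\lambda|$, assuming $\lambda^{\Xb}=\lambda$. Because the $b$-rim is always a subset of the rim, $\phi(\lambda)\le\lambda_1+k-1$; equating $\lambda_1=j_1=\phi(\lambda)-(k-1)-\delta$ then forces $\delta=0$ and $\phi(\lambda)=\lambda_1+k-1$, i.e.\ the $b$-rim equals the rim. Consequently $\lambda^{\J}=(\lambda_2,\dots,\lambda_k)$, and the shift relation gives $(\lambda^{\J})^{\Xb}=(\lambda_2,\dots,\lambda_k)=\lambda^{\J}$, so the inductive hypothesis makes $\lambda^{\J}$ a $b$-core; equivalently $B(\lambda)\setminus\{\beta_1\}$ is closed. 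Closure of the full set $B(\lambda)$ then reduces to a single relation at $\beta_1$: if $\beta_1\ge b$, I would use that the first length-$b$ piece of the $b$-rim cannot skip any rim box, so it must end precisely at the left end of some row $i_0$; since the number of rim boxes in rows $1,\dots,i_0$ equals $\beta_1-\beta_{i_0+1}$, this piece having length $b$ gives $\beta_1-b=\beta_{i_0+1}\in B(\lambda)$ (and $\beta_1\ne b$ because $\delta=0$ forces $b\nmid\beta_1$). Hence $B(\lambda)$ is closed and $\lambda$ is a $b$-core.

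The hard part will be this last bridge in the converse, namely extracting the missing closure relation $\beta_1-b\in B(\lambda)$ from the hypothesis that the $b$-rim equals the rim. This is exactly where the geometry of the $b$-rim enters: one must show that equality of the $b$-rim with the full rim forces every length-$b$ piece to terminate at a row change, and then identify the cumulative rim-count at such a change with the beta-number difference $\beta_1-\beta_{i+1}$. Establishing this no-skip/row-alignment statement, together with the two rim formulas $\phi(\lambda)=\beta_1$ and $\lambda^{\J}=(\lambda_2,\dots,\lambda_k)$, is the only genuinely combinatorial-geometric step; the remainder is bookkeeping with the beta-set and the shift relation for $\Xb$.
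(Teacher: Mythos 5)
Your argument is correct, but note that the paper does not actually prove Proposition \ref{MTcore}: it is imported from \cite{bessenrodt1999properties} and used as a black box (e.g.\ in Step 5 of the proof of Theorem \ref{main}). So your proposal is necessarily a different route --- a self-contained proof built from the $\I$, $\J$, $\Xb$ machinery the paper introduces for Section \ref{secunique}, the identity $\lambda^{\Xb}=\lambda^{\MbT}$, Lemma \ref{RAlemma}(2), and the classical abacus fact that $\lambda$ is a $b\con$core iff its set of first-column hook lengths $B(\lambda)=\{\beta_1>\dots>\beta_k\}$ is closed under subtracting $b$. I checked the key steps and they hold: when the $b\con$rim equals the rim, removing it leaves $(\lambda_2-1,\dots,\lambda_k-1)$, so $\phi(\lambda)=\lambda_1+k-1=\beta_1$ and $\lambda^{\J}=(\lambda_2,\dots,\lambda_k,\delta)$; the shift relation $(\lambda^{\J})^{\Xb}=(j_2,j_3,\dots)$ is immediate from $j_i=|\lambda^{\J^{i-1}}|-|\lambda^{\J^i}|$; in the forward direction no $\beta_i$ of a core is divisible by $b$ (descend by $b$ within $B(\lambda)$ to reach $0\notin B(\lambda)$), so $\delta=0$ at every stage, $\J$ is row deletion, and $j_i=\lambda_i$; in the converse, $j_1=\lambda_1=\phi(\lambda)-(k-1)-\delta$ together with $\phi(\lambda)\le\lambda_1+k-1$ correctly forces $\delta=0$ and $b\con$rim $=$ rim, and your \emph{no-skip} step is sound, because pieces after the first start at the rightmost box of row $i_0+1$ and never return to row $i_0$, so equality of the two rims forces the first piece to end at the leftmost rim box of row $i_0$, giving $b=\lambda_1-\lambda_{i_0+1}+i_0=\beta_1-\beta_{i_0+1}$ and hence $\beta_1-b=\beta_{i_0+1}\in B(\lambda)$. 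Two small points to make explicit in a final write-up: the beta-set characterization of cores is external to this paper and should be cited (e.g.\ \cite{james16representation}), and the induction in the converse needs $\lambda^{\J}=(\lambda_2,\dots,\lambda_k)$ to be $b\con$regular, which holds since its parts form a submultiset of the parts of $\lambda$. What your route buys is self-containedness: the appeal to \cite{bessenrodt1999properties} is replaced by bookkeeping with operators the paper already defines, at the cost of invoking the standard beta-number formalism.
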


\begin{proof}[Proof of Theorem \ref{main}]
First of all, by direct computation, $$(n)^{\Cr_{1,n}}=(n)^{\MnT}=(n-1,1);$$ $$\frac{1}{n}=\min_{(i,j)\in(n)}\frac{l_{ij}+1}{H_{ij}}=\max_{(i,j)\in(n-1,1)}\frac{l_{ij}}{H_{ij}}$$ when $n\geq2$. Then we induct on $k$ and suppose that until $p_k=q_k$, the two operations are exactly the same and the breaks satisfy the property in the theorem. In particular, $\lambda_{k+1}=\widetilde{\lambda_{k+1}}$ and $$p_k=q_k=\max_{(i,j)\in\lambda_{k+1}}\frac{l_{ij}}{H_{ij}}=\frac{a_k}{b_k}.$$ In the meantime, we induct on the fact that $\lambda_k$ is $b_k\con$regular.

Then we need to prove the following two claims:
\begin{itemize}
\item For any reduced fraction $\frac{a}{b}$ satisfying
\begin{gather}
\max_{(i,j)\in\lambda_{k+1}}\frac{l_{ij}}{H_{ij}}<\frac{a}{b}<\min_{(i,j)\in\lambda_{k+1}}\frac{l_{ij}+1}{H_{ij}}, \tag{$*$} 
\end{gather}
 there is $$\lambda_{k+1}^{\MbT}=\lambda_{k+1}^{\Crab}=\lambda_{k+1}.$$

\item Let $$\frac{a_{k+1}}{b_{k+1}}=\min_{(i,j)\in\lambda_{k+1}}\frac{l_{ij}+1}{H_{ij}}$$ be the reduced fraction. If $\frac{a_{k+1}}{b_{k+1}}=1$, then the process is ended, otherwise $\lambda_{k+1}$ is $b_{k+1}\con$regular and
 $$\lambda_{k+1}^{\mathcal{W}_{b_{k+1}}}=\lambda_{k+1}^{\MkT}=\lambda_{k+1}^{\Crabk}\neq\lambda_{k+1}.$$
\end{itemize}

For the part of $\lambda_{k+1}^{\MbT}=\lambda_{k+1}$ in the first claim, with $a$, $b$ satisfying $(*)$, we know that it suffices to prove $\lambda_{k+1}$ is a $b\con$core by Proposition \ref{MTcore}. 

Now suppose $\exists (i_0,j_0)\in \lambda_{k+1}$, such that $H_{i_0,j_0}=k_0 b$, where $k_0\in\mathbb{Z}_{>0}$, then
$$\frac{l_{i_0,j_0}}{k_0 b}=\frac{l_{i_0,j_0}}{H_{i_0,j_0}}\leq\max_{(i,j)\in\lambda_{k+1}}\frac{l_{ij}}{H_{ij}}<\frac{a}{b}<\min_{(i,j)\in\lambda_{k+1}}\frac{l_{ij}+1}{H_{ij}}\leq\frac{l_{i_0,j_0}+1}{H_{i_0,j_0}}=\frac{l_{i_0,j_0}+1}{k_0b}$$

hence, $$ k_0 a-1<l_{i_0,j_0}<k_0 a$$
and this leads to a contradiction since $l_{i_0,j_0}\in\mathbb{Z}$. Therefore $\lambda_{k+1}$ is a $b\con$core, when $$\max_{(i,j)\in\lambda_{k+1}}\frac{l_{ij}}{H_{ij}}<\frac{a}{b}<\min_{(i,j)\in\lambda_{k+1}}\frac{l_{ij}+1}{H_{ij}}$$.

In order to show that $\lambda_{k+1}^{\Crab}=\lambda_{k+1}$, with $a$, $b$ satisfying $(*)$, suppose there exists a ladder $$L_c: \; y+\frac{b-a}{a}x=\frac{c}{a}$$ such that $$L_c^+\cap\lambda_{k+1}\neq \emptyset$$ and the boxes in the intersection does not lie at the bottom of $L_c^+$, we do the following operations. Find $A\in L_c^+\cap\lambda_{k+1}$, $\overline{B}\in L_c^+\setminus\lambda_{k+1}$ and ${A}$ is above $\overline{B}$. Then pick ${A^\prime}$ which is a removable box and southeast to ${A}$ and an addable box $\overline{B^\prime}$ that is northwest to $\overline{B}$, as shown in the picture.

\begin{center}
\begin{tikzpicture}
\draw[step=0.5cm,gray,very thin] (0,0.5) grid (4,4);
\fill[red!30!white] (0.5,0.5) rectangle (1,1);
\fill[red!30!white] (0,1.5) rectangle (0.5,2);
\fill[black!30!white] (3.5,2.5) rectangle (4,3);
\fill[black!30!white] (2.5,3.5) rectangle (3,4);
\foreach \x/\y/\m in {+0.75/+0.75/$\overline{B}$,0.25/1.75/$\overline{B^\prime}$,+3.75/+2.75/$A^\prime$,+2.75/+3.75/$A$} 
    \node at (\x,\y) {\m};
    \draw (1,0.5) -- (3,3.5);
    \draw (0.5,1.5) -- (4,2.5);
\end{tikzpicture}
\end{center}

The following inequality holds:
$$\frac{b-a}{a}=-\Slope (A,\overline{B})\leq-\Slope (A^\prime,\overline{B^\prime})\leq \max_{(i,j)\in\lambda_{k+1}}\frac{a_{ij}}{l_{ij}+1}$$
i.e. $$\frac{a}{b}\geq\frac{1}{1+\max_{(i,j)\in\lambda_{k+1}}\frac{a_{ij}}{l_{ij}+1}}=\min_{(i,j)\in\lambda_{k+1}}\frac{l_{ij}+1}{H_{ij}}$$
which is a contradiction to $(*)$. Hence we obtain $\lambda_{k+1}^{\Crab}=\lambda_{k+1}$.

The second claim is equivalent to saying $\frac{a_{k+1}}{b_{k+1}}=\min_{(i,j)\in\lambda_{k+1}}\frac{l_{ij}+1}{H_{ij}}$ (reduced) is exactly the next break $p_{k+1}=q_{k+1}$, both for column regularization and Mullineux transpose operation. Suppose $\frac{a_{k+1}}{b_{k+1}}\neq1$ and we prove the second claim via the following steps.

Step 1.

Since $$\frac{b_{k+1}-a_{k+1}}{a_{k+1}}=\max_{(i,j)\in\lambda_{k+1}}\frac{a_{ij}}{l_{ij}+1},$$
we know when doing $\Crabk$ to $\lambda_{k+1}$, we can use the exactly same procedure as above and find out ${A}={A^\prime}$ is a removable box and $\overline{B}=\overline{B^\prime}$ is an addable box. This is saying on any ladders which are not full, we are sliding those removable boxes to addable boxes, which indicates $\lambda_{k+1}^{\Crabk}$ is a partition. Moreover, since $\lambda_{k+1}^{\Crabk}\neq\lambda_{k+1}$, we know $\widetilde{\lambda_{k+2}}=\lambda_{k+1}^{\Crabk}$ and $q_k<q_{k+1}=\frac{a_{k+1}}{b_{k+1}}\leq p_{k+1}$.

\begin{center}
\begin{tikzpicture}
\draw[step=0.5cm,gray,very thin] (0.5,0) grid (4.5,3);
\draw[thick] (1,1) -- (1.5,1) -- (1.5,2) -- (4,2) -- (4,2.5) -- (1,2.5) -- (1,1);
\draw (3.5,2) -- (3.5,2.5);
\fill[red!30!white] (1,0.5) rectangle (1.5,0.99);
\fill[black!60!white] (3.5,2) rectangle (4,2.5);
\draw [->] (4,2) -- (1.5,0.5);
\end{tikzpicture}
\end{center}

\begin{Remark}
\label{ladderup}
For any ladder $L_c$ which are not full in $\lambda_{k+1}$, the boxes in $L_c\cap \lambda_{k+1}$ always lies above the boxes in $L_c\setminus\lambda_{k+1}$.
\end{Remark}

\begin{center}
\begin{tikzpicture}
\draw[step=0.5cm,gray,very thin] (0,0.5) grid (9,6.5);
\fill[red!30!white] (0,1) rectangle (0.5,1.5);
\fill[red!30!white] (1.5,2) rectangle (2,2.5);
\fill[black!30!white] (3,3) rectangle (3.5,3.5);
\fill[red!30!white] (4.5,4) rectangle (5,4.5);
\fill[red!30!white] (6,5) rectangle (6.5,5.5);
\fill[black!30!white] (7.5,6) rectangle (8,6.5);
\draw[line width=1.8pt] (0,2/3) -- (8.75,6.5);
\draw[line width=1pt] (3,3) -- (3,5.5) --(6,5.5) -- (6,5) --(3.5,5) -- (3.5,3) -- (3,3);
\end{tikzpicture}
\end{center}

If there is a box in $L_c\setminus\lambda_{k+1}$ that lies above a box in $L_c\cap \lambda_{k+1}$, then there is a hook of length divisible by $b_{k+1}$ whose endpoint of leg and the box directly right to the endpoint of arm are on the same ladder, as shown in the picture. Suppose this hook has length $H_{i,j}=tb_{k+1}$, then we have $a_{ij}=t(b_{k+1}-a_{k+1})-1$ and $l_{ij}=ta_{k+1}$. Therefore, 
$$\frac{l_{ij}}{H_{ij}}=\frac{a_{k+1}}{b_{k+1}}\leq\max_{(u,v)\in\lambda_{k+1}}\frac{l_{uv}}{H_{uv}}=q_k<q_{k+1}=\frac{a_{k+1}}{b_{k+1}}$$
which is a contradiction.

Step 2.
\begin{Claim}
$$q_{k+1}=\frac{a_{k+1}}{b_{k+1}}=\max_{(i,j)\in\widetilde{\lambda_{k+2}}}\frac{l_{ij}}{H_{ij}}.$$
\end{Claim}
Denote all the sliding boxes in doing $\Crabk$ to $\lambda_{k+1}$ by ${A_1},...,{A_l}$, and each ${A_j}$ slides to $\overline{A_j}$. Let $\tilde{\lambda}\coloneqq \lambda_{k+1}\setminus \{{A_1},...,{A_l}\}=\widetilde{\lambda_{k+2}}\setminus \{\overline{A_1},...,\overline{A_l}\}$.

By construction, we know that $\exists (i_0,j_0)\in\widetilde{\lambda_{k+2}}$, we have $$\frac{l_{i_0,j_0}}{H_{i_0,j_0}}=\frac{a_{k+1}}{b_{k+1}}\leq \max_{(i,j)\in\widetilde{\lambda_{k+2}}}\frac{l_{ij}}{H_{ij}}.$$ 

In addition, for $(i,j)\in\tilde{\lambda}\subset\widetilde{\lambda_{k+2}}$, we have:
$$\frac{a_{ij}(\widetilde{\lambda_{k+2}})+1}{l_{ij}(\widetilde{\lambda_{k+2}})}\geq \frac{a_{ij}({\lambda_{k+1}})}{l_{ij}(\lambda_{k+1})+1}$$
and hence
$$\frac{l_{ij}(\widetilde{\lambda_{k+2}})}{H_{ij}(\widetilde{\lambda_{k+2}})}\leq\frac{l_{ij}(\lambda_{k+1})+1}{H_{ij}(\lambda_{k+1})}$$

$$\max_{(i,j)\in\tilde{\lambda}}\frac{l_{ij}(\widetilde{\lambda_{k+2}})}{H_{ij}(\widetilde{\lambda_{k+2}})}\leq \min_{(i,j)\in\tilde{\lambda}}\frac{l_{ij}(\lambda_{k+1})+1}{H_{ij}(\lambda_{k+1})}   =\min_{(i,j)\in{\lambda_{k+1}}}\frac{l_{ij}+1}{H_{ij}}=q_{k+1}=\frac{a_{k+1}}{b_{k+1}}.$$
The first equality is due to the fact that $\forall{(i,j)\in\{A_1,...,A_l\}}$, there is $\frac{l_{ij}(\lambda_{k+1})+1}{H_{ij}(\lambda_{k+1})}=1$.

Then 
$$\max_{(i,j)\in\widetilde{\lambda_{k+2}}}\frac{l_{ij}}{H_{ij}}=\max\{\max_{(i,j)\in\tilde{\lambda}}\frac{l_{ij}(\widetilde{\lambda_{k+2}})}{H_{ij}(\widetilde{\lambda_{k+2}})}, \max_{(i,j)\in\{\overline{A_1},...,\overline{A_l}\}}\frac{l_{ij}(\widetilde{\lambda_{k+2}})}{H_{ij}(\widetilde{\lambda_{k+2}})}=0\}=\frac{a_{k+1}}{b_{k+1}}.$$

Step 3.
\begin{Claim}
$\tilde{\lambda}$ is a $b_{k+1}\con$core.
\end{Claim}

If $\exists (\tilde{i_0},\tilde{j_0})\in\tilde{\lambda}$, such that $H_{\tilde{i_0},\tilde{j_0}}=\tilde{k_0}b_{k+1}$, then
$$\frac{l_{\tilde{i_0},\tilde{j_0}}}{\tilde{k_0}b_{k+1}}\leq\max_{(i,j)\in\tilde{\lambda}}\frac{l_{ij}}{H_{ij}}<\max_{(i,j)\in\widetilde{\lambda_{k+2}}}\frac{l_{ij}}{H_{ij}} =\frac{a_{k+1}}{b_{k+1}}=\min_{(i,j)\in\lambda_{k+1}}\frac{l_{ij}+1}{H_{ij}}<\min_{(i,j)\in\tilde{\lambda}}\frac{l_{ij}+1}{H_{ij}}\leq \frac{l_{\tilde{i_0},\tilde{j_0}}+1}{\tilde{k_0}b_{k+1}}.$$

Here the first "$<$" is because the arms in $\tilde{\lambda}$ are the same as the corresponding one in $\widetilde{\lambda_{k+2}}$, but legs may be the same or decrease by 1. The second "$<$" is because the legs in $\tilde{\lambda}$ are the same as the corresponding one in $\lambda_{k+1}$, but arms may be the same or decrease by 1. They are strict inequalities since we already removed those sliding boxes to obtain $\tilde{\lambda}$.

The above inequalities simplify to $l_{\tilde{i_0},\tilde{j_0}}<\tilde{k_0}a_{k+1}<l_{\tilde{i_0},\tilde{j_0}}+1$, which is a contradiction.

Step 4.
\begin{Claim}
${A_1},...,{A_l}$ lies on the same ladder.
\end{Claim}

First of all, since $\gcd(a_{k+1},b_{k+1})=1$, the integer boxes on ladder $L_c: \;y+\frac{b_{k+1}}{a_{k+1}}x=\frac{c}{a_{k+1}}$ have the same residue because $y-x=\frac{c-b_{k+1}x}{a_{k+1}}$ and consecutive integer boxes on it has $x\con$coordinates differ by $a_{k+1}$. Hence we'll call the residue of a ladder to be the residue of any integer box on it.

Without loss of generality, say ${A_1}\in L_{c_0}$. We suppose there exists some ${A_j}\in L_{c_1}$ where $c_1\neq c_0$.

Firstly, if $L_{c_0}$ and $L_{c_1}$ have the same residue, all possible integer boxes in $L_{c_0}$ and $L_{c_1}$ will lie on a grid with rectangles of size $a\times (b-a)$. From Remark \ref{ladderup}, we know in $L_{c_0}^+\cap\lambda_{k+1}$ are removable boxes and appear on top of addable boxes $L_{c_0}^+\setminus\lambda_{k+1}$, as shown in the picture where the black line is $L_{c_0}$ and black boxes are in $\lambda$ and red ones are not. 

\begin{center}
\begin{tikzpicture}
\draw[step=0.5cm,gray,very thin] (0,0) grid (9.5,6.5);
\fill[red!30!white] (0,0) rectangle (0.5,0.5);
\fill[red!30!white] (1.5,1) rectangle (2,1.5);
\fill[red!30!white] (3,2) rectangle (3.5,2.5);
\fill[red!30!white] (4.5,3) rectangle (5,3.5);
\fill[red!30!white] (6,4) rectangle (6.5,4.5);
\fill[red!30!white] (7.5,5) rectangle (8,5.5);
\fill[red!30!white] (9,6) rectangle (9.5,6.5);
\fill[red!30!white] (0,1) rectangle (0.5,1.5);
\fill[red!30!white] (1.5,2) rectangle (2,2.5);
\fill[red!30!white] (3,3) rectangle (3.5,3.5);
\fill[red!30!white] (4.5,4) rectangle (5,4.5);
\fill[black!30!white] (6,5) rectangle (6.5,5.5);
\fill[black!30!white] (7.5,6) rectangle (8,6.5);
\fill[black!30!white] (0,2) rectangle (0.5,2.5);
\fill[black!30!white] (1.5,3) rectangle (2,3.5);
\fill[black!30!white] (3,4) rectangle (3.5,4.5);
\fill[black!30!white] (4.5,5) rectangle (5,5.5);
\fill[black!30!white] (6,6) rectangle (6.5,6.5);
\fill[black!30!white] (0,3) rectangle (0.5,3.5);
\fill[black!30!white] (1.5,4) rectangle (2,4.5);
\fill[black!30!white] (3,5) rectangle (3.5,5.5);
\fill[black!30!white] (4.5,6) rectangle (5,6.5);
\draw[line width=1.8pt] (0,2/3) -- (8.75,6.5);
\draw[dotted,line width=.8pt] (0,5/3) -- (7.25,6.5);
\draw[dotted,line width=.8pt] (0.5,0) -- (9.5,6);
\draw[dotted,line width=.8pt] (0,8/3) -- (5.75,6.5);
\foreach \x/\y/\m in {+8.55/+6.75/$L_{c_0}$,+6/+2.75/$L_{c_1}\;(c_1>c_0)$,+1.5/+5/$L_{c_1}\;(c_1<c_0)$} 
    \node at (\x,\y) {\m};

\end{tikzpicture}
\end{center}

Using the definition of addable and removable boxes, we know any box on the grid and southeast to $L_{c_0}^+$ is not in $\lambda_{k+1}$ and any box on the grid and northwest to $L_{c_0}^+$ is in $\lambda_{k+1}$. This indicates when $c_1<c_0$, $L_{c_1}^+$ will be full, when $c_1>c_0$, $L_{c_1}^+$ will be empty, i.e. $L_{c_1}^+\cap\lambda_{k+1}=\emptyset$. In either case, there will be no ${A_j}$ in $L_{c_1}$.

Then we are left with the case when $L_{c_0}$ and $L_{c_1}$ have different residues. If the distance $\dist(L_{c_0},L_{c_1})>\frac{a(b-a)}{\sqrt{a^2+(b-a)^2}}$, then the same reasoning as above will show that $L_{c_1}$ is either empty or full, which is not possible. If $\dist(L_{c_0},L_{c_1})<\frac{a(b-a)}{\sqrt{a^2+(b-a)^2}}$, without loss of generality, we also assume $c_1>c_0$ find ${E}\in L_{c_1}\cap\lambda_{k+1},$ and $\overline{F}\in L_{c_0}\setminus\lambda_{k+1}$ where ${E}$ is above $\overline{F}$. This is always possible since $c_1>c_0$ and both ladders contain boxes both in and not in $\lambda_{k+1}$. From Step 1, ${E}$ is removable and $\overline{F}$ is addable. Now the hook in $\lambda_{k+1}$ corresponding to ${E}$ and $\overline{F}$ will break the inequality $\frac{b_{k+1}-a_{k+1}}{a_{k+1}}\geq\frac{a_{ij}}{l_{ij}+1}$.

\begin{center}
\begin{tikzpicture}
\draw[step=0.5cm,gray,very thin] (0,0.5) grid (9.5,7);
\fill[red!30!white] (0,1.5) rectangle (0.5,2);
\fill[black!30!white] (3,4) rectangle (3.5,4.5);
\fill[black!30!white] (6,6.5) rectangle (6.5,7);

\fill[red!30!white] (1.5,1) rectangle (2,1.5);
\fill[red!30!white] (4.5,3.5) rectangle (5,4);
\fill[black!30!white] (7.5,6) rectangle (8,6.5);

\draw[line width=1pt] (0,13/12) -- (7.1,7);
\draw[densely dotted, line width=0.8pt] (2.3,0.5) -- (9.5,6.5);
\draw[line width=1pt] (1.4,0.5) -- (9.2,7);
\foreach \x/\y/\m in {+0.25/+1.75/$\overline{F}$,7.75/6.25/$E$,+3.6/+2.75/$L_{c_1}$,+2.25/+3.5/$L_{c_0}$} 
    \node at (\x,\y) {\m};
\draw[line width=1.8pt] (0.5,1.5) -- (8,6);
\end{tikzpicture}
\end{center}

Now we arrived at the conclusion that ${A_1},...,{A_l}$ are on the same ladder. We denote this special ladder $L^{k+1}_*$ from now on.

Step 5.

Let's now prove $\lambda_{k+1}$ is $b_{k+1}\con$regular. If not, there is a box $(i_1,j_1)\in\lambda_{k+1}$ with the corresponding hook being a strip of length $b_{k+1}$ in the rim. Then we have $$\frac{l_{i_1,j_1}}{H_{i_1,j_1}}=\frac{b_{k+1}-1}{b_{k+1}}\leq\max_{(i,j)\in\lambda_{k+1}}\frac{l_{ij}}{H_{ij}}=p_k=q_k<\frac{a_{k+1}}{b_{k+1}}.$$
This contradicts our assumption that $\frac{a_{k+1}}{b_{k+1}}\neq1$ and hence $\lambda_{k+1}$ is $b_{k+1}\con$regular.

We'll now construct $\lambda_{k+1}^{\MkT}$ by decomposing $\lambda_{k+1}$ into a good box sequence and build them back using the same sequence as co-good decomposition sequence as stated in Theorem \ref{goodco-goodmt}.

Assume the integer boxes on $L^{k+1,+}_*$ is labelled by $A_1,...,A_m$ in order from northeast to southwest. Then from Remark \ref{ladderup}, the first $l$ are exactly our ${A_1},...,{A_l}$, and the rest $\overline{A_{l+1}},...,\overline{A_m}$ are addable boxes. Definition \ref{good} of good box indicates $A_l$ is a good box of $\lambda_{k+1}$ and ${A_{l-1}}$ is a good box of $\lambda_{k+1}\setminus {A_l}$ and it continues. So we get a good decomposition sequence of $\lambda_{k+1}$: ${A_l},...,{A_1},{G_1},...,G_{n-l}$, where ${G_1},...,{G_{n-l}}$ is a good decomposition sequence for the $b_{k+1}\con$core $\tilde{\lambda}$.

Since $\tilde{\lambda}$ is a $b_{k+1}\con$core, ${G_1},...,{G_{n-l}}$ is also a co-good decomposition sequence by Proposition \ref{MTcore}. Afterwards, we put ${A_1}$ to $\overline{A_m}$, since by definition, $\overline{A_m}$ is co-good in $\tilde{\lambda}\cup\overline{A_m}$. Then we put ${A_j}$ to position $\overline{{A_{m+1-j}}}$ in order since $\overline{A_j}$ is co-good in $\tilde{\lambda}\cup\overline{A_m}\cup\cdots\cup\overline{A_j}$. 

Hence $\lambda_{k+1}^{\MkT}=\tilde{\lambda}\cup\overline{A_m}\cup\cdots\cup\overline{A_{m+1-l}}$, and this is exactly sliding $ {A_1},..., {A_l}$ in order to the bottom of $L^{k+1,+}_*$, so $\lambda_{k+1}^{\MkT}=\lambda_{k+1}^{\Crabk}$.
\end{proof}

\begin{Corollary}
\label{endcolumn}
The partition in the last interval $[\frac{n-1}{n},1]$ is exactly the column $(1^n)$.
\end{Corollary}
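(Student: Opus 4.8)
The plan is to deduce the statement directly from the two results already in hand, namely Lemma \ref{valid} and part (2) of Theorem \ref{main}. Lemma \ref{valid}, applied to the starting partition $(n)$, states precisely that the column-regularization procedure terminates in the final Farey interval with $\widetilde{\lambda_{[\frac{n-1}{n},1]}} = \widetilde{B_{[\frac{n-1}{n},1]}}((n)) = (1^n)$. On the other hand, Theorem \ref{main}(2) asserts that the two procedures agree on every interval, $\lambda_I = \widetilde{\lambda_I}$. Specializing to $I = [\frac{n-1}{n},1]$ immediately gives $\lambda_{[\frac{n-1}{n},1]} = \widetilde{\lambda_{[\frac{n-1}{n},1]}} = (1^n)$, which is the claim for the combinatorial wall-crossing.

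To make the statement self-contained I would also check that $[\frac{n-1}{n},1]$ is genuinely the last interval carrying a nontrivial partition, i.e. that the sequence of breaks really stops there. This follows from the termination condition recorded in the proof of Theorem \ref{main}: the process halts at the first stage where $\min_{(i,j)} \frac{l_{ij}+1}{H_{ij}} = 1$. Since $\frac{l_{ij}+1}{H_{ij}} = \frac{l_{ij}+1}{a_{ij}+l_{ij}+1}$ equals $1$ exactly when $a_{ij} = 0$, this minimum equals $1$ if and only if every box has empty arm, i.e. every row has length one, which forces the partition to be the single column $(1^n)$. Conversely, for $(1^n)$ the top box gives $\max_{(i,j)} \frac{l_{ij}}{H_{ij}} = \frac{n-1}{n}$, so by Theorem \ref{main}(1) the break immediately preceding this terminal partition is exactly $\frac{n-1}{n}$; this confirms that $(1^n)$ occupies the interval $[\frac{n-1}{n},1]$ and no later break occurs.

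There is essentially no obstacle here, as the corollary is a direct specialization of results already established; the only point worth spelling out is the bookkeeping that pins the terminal interval to $[\frac{n-1}{n},1]$ rather than to some earlier interval. Both routes give the same conclusion — invoking Lemma \ref{valid} through the identification $\lambda_I = \widetilde{\lambda_I}$ of Theorem \ref{main}(2), or reading the termination condition off the minimum-ratio characterization — and either one serves as the essentially one-line proof.
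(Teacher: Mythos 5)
Your proof is correct, but it takes a genuinely different route from the paper's. The paper does not invoke Lemma \ref{valid} here at all: it observes that in characteristic $2$ the sign representation is trivial, so $\lambda^{\Mtwo}=\lambda$ for every $2\con$regular partition $\lambda$, and that column regularization is transpose-equivariant in the sense $(\lambda^{\Tr})^{\Crbab}=(\lambda^{\Crab})^{\Tr}$; combined with the identification $\lambda_I=\widetilde{\lambda_I}$ from Theorem \ref{main}, this makes the entire sequence of partitions symmetric under transpose about the wall $\frac{1}{2}$, whence the partition in the last interval is $(n)^{\Tr}=(1^n)$. You instead specialize the ``in particular'' clause of Lemma \ref{valid} (proved there by the slope-exhaustion argument, and valid for an arbitrary starting partition) to get $\widetilde{\lambda_{[\frac{n-1}{n},1]}}=(1^n)$, and transfer this to the wall-crossing side via Theorem \ref{main}(2). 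Each approach has its merits: yours is shorter and rests only on statements already proved verbatim --- Lemma \ref{valid} literally contains the claim $\widetilde{B_{[\frac{n-1}{n},1]}}(\lambda)=(1^n)$ --- whereas the paper's symmetry argument yields a stronger structural fact (the transpose symmetry of the whole sequence about $\frac{1}{2}$) that the endpoint statement alone does not capture, at the cost of a bridging step that is stated rather tersely. Your second paragraph, pinning the final break to $\frac{n-1}{n}$ via the characterization $\min_{(i,j)}\frac{l_{ij}+1}{H_{ij}}=1$ iff the partition is a column, is sound but not strictly necessary once Lemma \ref{valid} is cited, since that lemma already names the interval $[\frac{n-1}{n},1]$.
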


\begin{proof}
In characteristic $2$, sign representation is exactly the trivial representation, hence $\lambda^{\Mtwo}=\lambda$ for every $2\con$regular partition $\lambda$. Also, $$(\lambda^{\Tr})^{\Crbab}=(\lambda^{\Crab})^{\Tr},$$which is direct by Definition \ref{colreg}. Hence the sequence of partitions are symmetric via transpose at $\frac{1}{2}$. So $\lambda_{[\frac{n-1}{n},1]}=\widetilde{\lambda_{[\frac{n-1}{n},1]}}=(n)^{\Tr}=(1^n)$.
\end{proof}


\section{Detailed Descriptions of the Sequence of Partitions}
\label{secquotient}
From the constructions in Section \ref{mainsection}, we know the sequence $\lambda_I$ is a series of decreasing partitions starting from the row $(n)$ and ending at the column $(1^n)$. In this section, we provide more details to the partitions $\lambda_k$'s.

\begin{Corollary}
The $b_{k}\con$quotient of  $\lambda_k$ is $\Quot_{b_{k}}(\lambda_k)=(\emptyset,...,\emptyset,(h_2^{h_1}),\emptyset,...,\emptyset)$, where the only nonempty entry is a rectangle. Here $h_1=|L_*^{k,+}\cap\lambda_k|$ and $h_2=|L_*^{k,+}\setminus\lambda_k|$ and the rectangle appears at the $(j_k+1)\con$th entry where $j_k$ is the residue of $L_*^k$.

Moreover, the $b_{k}\con$quotient of  $\lambda_{k+1}$ is $\Quot_{b_{k}}(\lambda_{k+1})=(\emptyset,...,\emptyset,(h_1^{h_2}),\emptyset,...,\emptyset)$, where the rectangle appears at the $j_k\con$th entry.

In addition, the $b_k\con$core of $\lambda_{k+1}$ and $\lambda_{k}$ are the same.
\end{Corollary}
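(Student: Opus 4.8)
The plan is to reduce everything to the James abacus (equivalently, the beta--number/Maya description) of $\lambda_k$ and $\lambda_{k+1}$, exploiting the structure already extracted in the proof of Theorem \ref{main}: the passage $\lambda_k\mapsto\lambda_{k+1}$ takes place entirely on the single ladder $L_*^{k}$, and deleting its occupied boxes leaves the $b_k$-core $\tilde\lambda$. List the integer boxes of $L_*^{k,+}$ as $P_1,\dots,P_m$ from northeast to southwest, so $m=h_1+h_2$; by Remark \ref{ladderup} the first $h_1$ are the boxes of $\lambda_k$ on the ladder and the last $h_2$ are empty, while in $\lambda_{k+1}$ exactly the last $h_1$ are occupied. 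All $P_s$ carry the content $c_s=c_1-(s-1)b_k$, hence the common residue $j_k$, and $\tilde\lambda=\lambda_k\setminus\{P_1,\dots,P_{h_1}\}=\lambda_{k+1}\setminus\{P_{h_2+1},\dots,P_m\}$ contains none of them.

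First I would record the elementary dictionary. Adding a box of content $c$ moves one bead from beta-position $c-\tfrac12$ to $c+\tfrac12$, i.e.\ between the runners of residues $j_k-\tfrac12$ and $j_k+\tfrac12$; and among boxes with hook length divisible by $b_k$, one whose arm-end has residue $r$ contributes to the component $\lambda^{(r)}$ of $\Quot_{b_k}$, which corresponds to a bead sitting above a gap on the runner $r+\tfrac12$. Since $\tilde\lambda$ is a $b_k$-core, each of its runners is gapless (every quotient component is empty), so the only gaps created in $\lambda_k$ or $\lambda_{k+1}$ come from the $h_1$ bead moves encoding the ladder boxes. For $\lambda_k$ these moves deposit $h_1$ beads in the top $h_1$ slots of runner $j_k+\tfrac12$, directly above the $h_2$ empty slots that are the images of the empty ladder boxes; that runner therefore reads off as the rectangle with $h_1$ rows of length $h_2$, namely $(h_2^{h_1})$ in component $j_k$, while the companion runner $j_k-\tfrac12$ merely loses its top $h_1$ beads and stays gapless. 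This gives the first assertion, with the rectangle in the $(j_k+1)$-st entry.

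For $\lambda_{k+1}$ the same $h_1$ moves are applied at the \emph{bottom} $h_1$ slots of the window: now runner $j_k+\tfrac12$ becomes gapless, whereas on runner $j_k-\tfrac12$ the beads removed from the bottom of the window leave the top $h_2$ window beads stranded above $h_1$ gaps, which reads off as $(h_1^{h_2})$ in component $j_k-1$, i.e.\ the $j_k$-th entry. The third assertion is then immediate from the same bead bookkeeping: both $\lambda_k$ and $\lambda_{k+1}$ are obtained from $\tilde\lambda$ by adjoining exactly $h_1$ boxes of content $\equiv j_k\pmod{b_k}$, so each effects the identical net transfer of $h_1$ beads from runner $j_k-\tfrac12$ to runner $j_k+\tfrac12$. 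Hence the two abaci have the same number of beads on every runner, and (with the common size $n$ fixing the charge) partitions with equal per-runner bead counts have the same $b_k$-core, giving $\Core_{b_k}(\lambda_k)=\Core_{b_k}(\lambda_{k+1})$.

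The crux, and the step I expect to cost the most, is pinning down exactly where the beads of $\tilde\lambda$ sit on the two runners $j_k\pm\tfrac12$: namely that the highest bead of runner $j_k-\tfrac12$ is at position $c_1-\tfrac12$ and the highest of runner $j_k+\tfrac12$ is at $c_{m+1}+\tfrac12$. This is precisely what forces the window $P_1,\dots,P_m$ to map to one clean block of consecutive slots, with no interleaving sea beads, and hence makes each nonempty quotient component a genuine rectangle rather than a more general shape. I would derive it from the fact that $L_*^{k,+}$ is the truncation of the ladder to the positive quadrant, so that $P_1$ is addable to $\tilde\lambda$ while the slot just northeast of it lies off the diagram, and $P_m$ abuts the first column, combined with the core property of $\tilde\lambda$ already established in the proof of Theorem \ref{main}. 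The residue bookkeeping, namely the shift between the $(j_k+1)$-st and the $j_k$-th entry and the transpose of the rectangle, should then follow directly, and I would cross-check the entire computation against the $n=7$ table.
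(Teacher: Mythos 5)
Your abacus strategy is a genuinely different route from the paper's: the paper never introduces beta-numbers, but instead shows directly from the inequalities $\max_{(i,j)\in\lambda_k}l_{ij}/H_{ij}<a_k/b_k\le\min_{(i,j)\in\lambda_k}(l_{ij}+1)/H_{ij}$ that any box with $b_k\mid H_{ij}$ must have $l_{ij}=ta_k-1$ and $a_{ij}=t(b_k-a_k)$, hence has its arm-end among the occupied ladder positions and the box below its leg-end among the empty ones (Step~4 of the proof of Theorem \ref{main} pins the ladder down to $L_*^k$), so these boxes form an $h_1\times h_2$ grid; the core equality is simply cited from \cite{ford1997proof}. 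Your replacement of that citation by the observation that $\lambda_k$ and $\lambda_{k+1}$ induce the same per-runner bead counts is correct and self-contained, which is a genuine plus of your approach.

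However, there is a real gap at exactly the step you flag as the crux. The facts you propose to derive it from --- quadrant truncation of the ladder, addability of $P_1$ with the next slot off the diagram, $P_m$ being the last quadrant point, and $\tilde\lambda$ being a $b_k$-core --- do \emph{not} imply it. Counterexample: take $b_k=3$, $a_k=2$, the ladder $2y+x=8$ with quadrant points $(2,3),(4,2),(6,1)$, and $\lambda=(4,3,2,1,1)$. Then $(2,3)$ is occupied and removable, $(4,2)$ and $(6,1)$ are empty and addable, $P_m=(6,1)$ even abuts the first column, and $\tilde\lambda=(4,2,2,1,1)$ is a $3$-core to which $(2,3)$ is addable; yet the first row of $\tilde\lambda$ places a bead at $c_1+b_k-1=3$, i.e.\ strictly above $c_1-1$ on the runner of residue $j_k-1$, so your crux fails, and correspondingly $\Quot_3\bigl((4,3,2,1,1)\bigr)=\bigl((1),(2),\emptyset\bigr)$ has two nonempty components and no rectangle. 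What excludes such configurations in the genuine wall-crossing situation is not the core property but the hook-ratio inequalities above (here the box $(1,1)$ has $(l_{11}+1)/H_{11}=5/8<2/3$, violating the min-inequality) together with the single-ladder statement of Step~4: a gap on runner $j_k$ at $c_m-b_k$ creates a box of hook length $b_k-1$ whose leg violates the strict max-inequality, and a bead at $c_1+b_k-1$ creates a box of hook length exactly $b_k$ whose arm-end would be a sliding box on a ladder different from $L_*^k$. So to close your proof you must import precisely these ingredients from the proof of Theorem \ref{main} --- which is, in effect, what the paper's own argument does. Separately, note that ``$P_m$ abuts the first column'' is false in general: for $n=7$, crossing $\tfrac15$ takes $(6,1)$ to $(5,2)$ along a ladder whose last quadrant point is $(2,2)$; only $y_m\le b_k-a_k$ is guaranteed.
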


\begin{proof}
From \cite{ford1997proof}, $\Core_{b_k}(\lambda_{k+1})=\Core_{b_k}(\lambda_{k})$ since $\lambda_{k+1}=\lambda_{k}^{\MkkT}$.

Denote $\Quot_{b_{k}}(\lambda_{k+1})=(\nu_{k,0},...,\nu_{k,b_k-1})$ from Definition \ref{haimanquotient}. $\nu_{k,s}$ is exactly the exploded boxes $(i,j)$ with corresponding hook divisible by $b_k$ and the residue of the box at the end of the arm has residue $s$. Let $H_{ij}=tb_k$, and since $\frac{a_k}{b_k}=\min_{(i,j)\in\lambda_k}\frac{l_{ij}+1}{H_{ij}}>\max_{(i,j)\in\lambda_{k}}\frac{l_{ij}}{H_{ij}}$, we have:
$$\frac{l_{ij}}{H_{ij}}<\frac{a_k}{b_k}\leq\frac{l_{ij}+1}{H_{ij}},$$
hence we get $l_{ij}=ta_k-1$ and $a_{ij}=t(b_k-a_k)$.

\begin{center}
\begin{tikzpicture}
\draw[step=0.5cm,gray,very thin] (0,0.5) grid (9,6.5);
\fill[red!30!white] (0,1) rectangle (0.5,1.5);
\fill[red!30!white] (1.5,2) rectangle (2,2.5);
\fill[red!30!white] (3,3) rectangle (3.5,3.5);
\fill[red!30!white] (4.5,4) rectangle (5,4.5);
\fill[black!30!white] (6,5) rectangle (6.5,5.5);
\fill[black!30!white] (7.5,6) rectangle (8,6.5);
\fill[blue!30!white] (0,5) rectangle (0.5,5.5);
\fill[blue!30!white] (1.5,5) rectangle (2,5.5);
\fill[blue!30!white] (3,5) rectangle (3.5,5.5);
\fill[blue!30!white] (4.5,5) rectangle (5,5.5);
\fill[blue!30!white] (0,6) rectangle (0.5,6.5);
\fill[blue!30!white] (1.5,6) rectangle (2,6.5);
\fill[blue!30!white] (3,6) rectangle (3.5,6.5);
\fill[blue!30!white] (4.5,6) rectangle (5,6.5);
\draw[densely dotted,line width=1pt] (0.5,1) -- (0.5,6.5);
\draw[densely dotted,line width=1pt] (2,2) -- (2,6.5);
\draw[densely dotted,line width=1pt] (3.5,3) -- (3.5,6.5);
\draw[densely dotted,line width=1pt] (5,4) -- (5,6.5);
\draw[densely dotted,line width=1pt] (0,5) -- (6.5,5);
\draw[densely dotted,line width=1pt] (0,6) -- (8,6);
\draw[line width=1pt] (0,0.5) -- (0,6.5);
\draw[line width=1pt] (0,6.5) -- (9,6.5);
\draw[line width=1.8pt] (0,2/3) -- (8.75,6.5);
\end{tikzpicture}
\end{center}

This is exactly saying the endpoint of the arm of $(i,j)$ and the box directly underneath the endpoint of the leg of $(i,j)$ are on $L_*^{k,+}$. Hence $\nu_{k,s_0}=(h_2^{h_1})$ where $s_0$ is the residue of $L_*^k$, $h_1=|L_*^{k,+}\cap\lambda_k|$ and $h_2=|L_*^{k,+}\setminus\lambda_k|$ and all other entries in the quotient are $\emptyset$.

Now consider $\lambda_{k+1}=\lambda_{k}^{\MkkT}$ and denote the $b_{k}\con$quotient of it by $(\xi_{k,0},...,\xi_{k,b_k-1})$. From Theorem \ref{main}, we have
$$\frac{a_k}{b_k}=\max_{(i,j)\in\lambda_{k+1}}\frac{l_{ij}}{H_{ij}}<\min_{(i,j)\in\lambda_{k+1}}\frac{l_{ij}+1}{H_{ij}}.$$

\begin{center}
\begin{tikzpicture}
\draw[step=0.5cm,gray,very thin] (0,0.5) grid (9,6.5);
\fill[black!30!white] (0,1) rectangle (0.5,1.5);
\fill[black!30!white] (1.5,2) rectangle (2,2.5);
\fill[red!30!white] (3,3) rectangle (3.5,3.5);
\fill[red!30!white] (4.5,4) rectangle (5,4.5);
\fill[red!30!white] (6,5) rectangle (6.5,5.5);
\fill[red!30!white] (7.5,6) rectangle (8,6.5);
\fill[blue!30!white] (0,5) rectangle (0.5,5.5);
\fill[blue!30!white] (1.5,5) rectangle (2,5.5);
\fill[blue!30!white] (0,6) rectangle (0.5,6.5);
\fill[blue!30!white] (1.5,6) rectangle (2,6.5);
\fill[blue!30!white] (0,4) rectangle (0.5,4.5);
\fill[blue!30!white] (0,3) rectangle (0.5,3.5);
\fill[blue!30!white] (1.5,4) rectangle (2,4.5);
\fill[blue!30!white] (1.5,3) rectangle (2,3.5);
\draw[densely dotted,line width=1pt] (0,5) -- (6.5,5);
\draw[densely dotted,line width=1pt] (0,6) -- (8,6);
\draw[densely dotted,line width=1pt] (0,4) -- (5,4);
\draw[densely dotted,line width=1pt] (0,3) -- (3.5,3);
\draw[densely dotted,line width=1pt] (0.5,1) -- (0.5,6.5);
\draw[densely dotted,line width=1pt] (2,2) -- (2,6.5);
\draw[line width=1pt] (0,0.5) -- (0,6.5);
\draw[line width=1pt] (0,6.5) -- (9,6.5);
\draw[line width=1.8pt] (0,2/3) -- (8.75,6.5);
\end{tikzpicture}
\end{center}

When $(i^\prime,j^\prime)\in\lambda_{k+1}$ satisfies $H_{ij}=t^\prime b_k$, we have the following inequality:
$$\frac{l_{i^\prime j^\prime}}{t^\prime b_k}\leq\frac{a_k}{b_k}<\frac{l_{i^\prime j^\prime}+1}{t^\prime b_k},$$
and this simplifies to $l_{i^\prime j^\prime}=t^\prime a_k$ and $a_{i^\prime j^\prime}=t^\prime(b_k-a_k)-1$. This indicates the endpoint of the leg of $(i^\prime,j^\prime)$ and the box directly right to the endpoint of the arm of $(i^\prime,j^\prime)$ are both on $L_*^{k,+}$. Therefore, $\xi_{k,s_0-1}=(h_1^{h_2})$ and all other entries in the $b_{k}\con$quotient of $\lambda_{k+1}$ are empty partitions.

\end{proof}

\section{Uniqueness of Monotonicity}
\label{secunique}
From the proof of Theorem \ref{main}, we have already seen that $\lambda_I=B_I((n))$ is $b\con$regular if the right endpoint of $I$ has denominator $b$. Now we will show that $(n)$ is the unique partition that always stays regular under the series of combinatorial wall-crossings.

\begin{Lemma}
\label{preuniq}
Given a $b$-regular partition $\lambda$, if $b\mid \phi(\lambda)$ where $\phi(\lambda)=|\lambda|-|\lambda^{\I}|$, then $$j_1=|\lambda|-|\lambda^{\J}|\le \lambda_1-1.$$
\end{Lemma}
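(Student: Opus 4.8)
The plan is to evaluate $j_1$ explicitly from the definition of $\J$ and then bound the result by comparing the $b$-rim against the full rim. Let $k$ denote the number of (positive) parts of $\lambda$, so that in the notation of Definition \ref{defJ} we have $\lambda^{\I}=(\mu_1,\dots,\mu_k)$ (with trailing zeros allowed). From $\lambda^{\J}=(\mu_1+1,\dots,\mu_{k-1}+1,\mu_k+\delta)$ I read off $|\lambda^{\J}|=|\lambda^{\I}|+(k-1)+\delta$, the term $k-1$ counting the $+1$ added to each of the first $k-1$ slots. (This identity is unaffected by trailing zeros among the $\mu_i$, since each such slot still receives a $+1$, which is why the same $k$ may be used throughout.) Consequently
$$ j_1=|\lambda|-|\lambda^{\J}|=\bigl(|\lambda|-|\lambda^{\I}|\bigr)-(k-1)-\delta=\phi(\lambda)-(k-1)-\delta. $$
Since we are in the case $b\mid\phi(\lambda)$, Definition \ref{defJ} gives $\delta=1$, so this reduces to $j_1=\phi(\lambda)-k$.

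The remaining task is to prove $\phi(\lambda)\le\lambda_1+k-1$. The key observation is that $\phi(\lambda)=|\lambda|-|\lambda^{\I}|$ is exactly the number of boxes in the $b$-rim of $\lambda$, and that by its very definition the $b$-rim is a subset of the rim; hence it suffices to bound the size of the full rim. I would count the rim boxes row by row: a box $(i,j)\in\lambda$ lies in the rim precisely when $(i+1,j+1)\notin\lambda$, i.e. when $j\ge\lambda_{i+1}$ under the convention $\lambda_{k+1}=0$. Thus row $i$ contributes $\lambda_i-\lambda_{i+1}+1$ rim boxes for $i<k$ and $\lambda_k$ rim boxes for $i=k$, and summing telescopes to
$$ \sum_{i=1}^{k-1}(\lambda_i-\lambda_{i+1}+1)+\lambda_k=(\lambda_1-\lambda_k)+(k-1)+\lambda_k=\lambda_1+k-1. $$
Therefore $\phi(\lambda)\le\lambda_1+k-1$.

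Combining the two computations gives $j_1=\phi(\lambda)-k\le(\lambda_1+k-1)-k=\lambda_1-1$, which is the claim.

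I do not expect a genuine obstacle here: the argument is essentially bookkeeping with the definitions of $\I$ and $\J$ together with the fact that the $b$-rim sits inside the rim. The only step that requires a little care is the rim-size count, where the last row must be treated separately (it contributes $\lambda_k$ rather than $\lambda_k+1$, because $\lambda_{k+1}=0$ is not a genuine column); getting this boundary term right is what makes the bound $\lambda_1+k-1$ exactly tight for the conclusion $j_1\le\lambda_1-1$.
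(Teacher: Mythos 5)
Your proof is correct and follows essentially the same route as the paper's: compute $|\lambda^{\J}|=|\lambda^{\I}|+(k-1)+\delta$ with $\delta=1$, so $j_1=\phi(\lambda)-k$, then bound $\phi(\lambda)$ by the size of the full rim, which is $\lambda_1+k-1$, using that the $b$-rim is contained in the rim. Your write-up is in fact slightly more careful than the paper's (the explicit row-by-row rim count and the remark about trailing zeros), and it avoids a typo in the paper's intermediate formula for $|\lambda^{\J}|$.
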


\begin{proof}
$j_1=|\lambda|-|\lambda^{\J}|$, where $|\lambda^{\J}|=|\lambda|-|\lambda^{\I}|+k$ since $\delta=1$ as in Definition \ref{defJ}. But since the $b\con$rim is a subset of the rim, $|\lambda^{\J}|\geq|\lambda|-(k+\lambda_1-1)+k=|\lambda|-\lambda_1+1$. Substituting back, we immediately obtain the required inequality.
\end{proof}

\begin{Lemma}
\label{pre2lem}
Given a $b\con$regular partition $\lambda$, if $b\mid H_{11}$, then $j_1\le \lambda_1-1$.
\end{Lemma}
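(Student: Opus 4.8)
The plan is to match this lemma to Lemma~\ref{preuniq}, whose conclusion is identical but whose hypothesis is $b\mid\phi(\lambda)$ rather than $b\mid H_{11}$. These hypotheses genuinely differ (one can have $b\mid H_{11}$ while $b\nmid\phi(\lambda)$), so I would first pin down the exact value of $j_1$ and then show that the hypothesis $b\mid H_{11}$ is exactly what is needed to handle the one case that Lemma~\ref{preuniq} does not already cover.

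First I would record $j_1$ explicitly. Writing $k$ for the number of parts of $\lambda$ and $\phi=\phi(\lambda)=|\lambda|-|\lambda^{\I}|$, Definition~\ref{defJ} gives $|\lambda^{\J}|=|\lambda^{\I}|+(k-1)+\delta$, whence
\[
j_1=|\lambda|-|\lambda^{\J}|=\phi-(k-1)-\delta,\qquad \delta=\begin{cases}1 & \text{if } b\mid\phi,\\ 0 & \text{if } b\nmid\phi.\end{cases}
\]
Next I would invoke the same geometric input used in Lemma~\ref{preuniq}: the $b$-rim is a subset of the rim, and the rim of $\lambda$ has exactly $H_{11}=\lambda_1+k-1$ boxes, so that $\phi\le H_{11}$.

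The argument then splits on whether this bound is attained. If $\phi<H_{11}$, then $\phi\le H_{11}-1$ and, since $\delta\ge 0$,
\[
j_1=\phi-(k-1)-\delta\le (H_{11}-1)-(k-1)=\lambda_1-1,
\]
and the hypothesis $b\mid H_{11}$ is not even used. If instead $\phi=H_{11}$, then $b\mid H_{11}$ forces $b\mid\phi$, so $\delta=1$ and we are precisely in the situation of Lemma~\ref{preuniq}, giving $j_1=H_{11}-(k-1)-1=\lambda_1-1$. In either case $j_1\le\lambda_1-1$, as required.

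The only role of the hypothesis is in the boundary case $\phi=H_{11}$, i.e.\ when the $b$-rim exhausts the entire rim; there divisibility is exactly what upgrades the generic estimate $j_1\le\lambda_1$ to $j_1\le\lambda_1-1$. I do not anticipate a substantive obstacle for this lemma: the content is the case split, and the only points requiring care are the identity $j_1=\phi-(k-1)-\delta$ and the identification of the rim length with $H_{11}$, both of which are immediate from the definitions and from the computation already performed in Lemma~\ref{preuniq}.
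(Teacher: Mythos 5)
Your proof is correct and is essentially the paper's own argument: both rest on the identity $j_1=\phi(\lambda)-(k-1)-\delta$ and on the fact that the $b$-rim is contained in the rim, which has exactly $H_{11}=\lambda_1+k-1$ boxes. The only difference is how the case split is phrased --- the paper splits on $b\mid\phi(\lambda)$ (handled by Lemma \ref{preuniq}) versus $b\nmid\phi(\lambda)$ (which together with $b\mid H_{11}$ forces $\phi(\lambda)<H_{11}$), while you split on $\phi(\lambda)<H_{11}$ versus $\phi(\lambda)=H_{11}$ (which forces $b\mid\phi(\lambda)$); these are the same two cases viewed from opposite sides.
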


\begin{proof}
The condition $b\mid H_{11}$ implies that the number of boxes in the rim is divisible by $b$. If $b\mid \phi(\lambda)$ then we are done from Lemma \ref{preuniq}. If $b\nmid \phi(\lambda)$ then since the $b\con$rim is not divisible by $b$, it is strictly smaller than the rim so $|\lambda^{\J}|>|\lambda|-(k+\lambda_1-1)+(k-1)$. Substituting to the formula giving $j_1$ we get the desired inequality.
\end{proof}

\begin{Proposition}
\label{pre3prop}
Begin with any partition $\lambda$ of $n$ and perform the wall-crossing transformation, as long as the partition stays regular, it will hold that $$\frac{l_{11}}{H_{11}}>\frac{a}{b}$$ in the partition exactly after we cross the wall $\frac{a}{b}\in F_n$.
\end{Proposition}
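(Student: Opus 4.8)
The plan is to induct on the walls of $F_n$ in increasing order, taking the Proposition at the previous wall as the inductive hypothesis. Fix a wall $\frac ab$, let $\nu$ denote the partition present immediately before crossing it and $\mu=\nu^{\mathcal W_b}=\nu^{\MbT}$ the partition immediately after (legitimate while $\nu$ stays $b\con$regular). By Definition \ref{extMT} one has $\mu=(\nu^{\Mb})^{\Tr}$, so the corner data transpose into each other: $l_{11}(\mu)=\ell(\mu)-1=(\nu^{\Mb})_1-1$ and $H_{11}(\mu)=H_{11}(\nu^{\Mb})$, while the Bessenrodt--Olsson description $\mu=\nu^{\Xb}$ identifies the first part as $\mu_1=j_1=|\nu|-|\nu^{\J}|$. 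The inductive hypothesis gives $\frac{l_{11}(\nu)}{H_{11}(\nu)}>w'$ for the preceding wall $w'$; since this ratio is a reduced fraction of denominator dividing $H_{11}(\nu)\le n$, hence lies in $F_n$, exceeding the consecutive wall $w'$ forces $\frac{l_{11}(\nu)}{H_{11}(\nu)}\ge\frac ab$.

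If $\nu$ is a $b\con$core, then by Proposition \ref{MTcore} the crossing fixes it, $\mu=\nu$; no hook of $\nu$ is divisible by $b$, so $b\nmid H_{11}(\nu)$ and $\frac{l_{11}(\nu)}{H_{11}(\nu)}\neq\frac ab$. Combined with the Farey bound this already gives $\frac{l_{11}(\mu)}{H_{11}(\mu)}>\frac ab$, so only the genuine crossings remain to be handled.

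Now suppose $\nu$ is not a $b\con$core. I would record two bounds. First, the elementary containment of the $b\con$rim in the rim (the same estimate driving the proof of Lemma \ref{preuniq}) gives $j_1\le\nu_1$ unconditionally, and I upgrade this to $\mu_1=j_1\le\nu_1-1$ by invoking Lemma \ref{pre2lem} or Lemma \ref{preuniq}, the required divisibility hypothesis ($b\mid H_{11}(\nu)$, equivalently $\frac{l_{11}(\nu)}{H_{11}(\nu)}=\frac ab$, or else $b\mid\phi(\nu)$) being supplied by the genuine crossing together with the bound $\frac{l_{11}(\nu)}{H_{11}(\nu)}\ge\frac ab$. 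Second, applying the same reasoning to the Mullineux partner $\xi=\nu^{\Mb}$ and using involutivity $\xi^{\MbT}=(\nu^{\Mb})^{\MbT}=\nu^{\Tr}$, the unconditional bound $j_1(\xi)\le\xi_1$ reads $\ell(\nu)=(\nu^{\Tr})_1=j_1(\xi)\le\xi_1=(\nu^{\Mb})_1=\ell(\mu)$, i.e.\ $\ell(\mu)\ge\ell(\nu)$.

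With these in hand the conclusion is a short computation. Writing $l_{11}(\mu)=\ell(\mu)-1$ and $H_{11}(\mu)=\mu_1+\ell(\mu)-1$, the target $\frac{l_{11}(\mu)}{H_{11}(\mu)}>\frac ab$ is equivalent to $(b-a)(\ell(\mu)-1)>a\,\mu_1$, and
$$(b-a)\bigl(\ell(\mu)-1\bigr)\ \ge\ (b-a)\,l_{11}(\nu)\ \ge\ a\bigl(a_{11}(\nu)+1\bigr)\ >\ a\,a_{11}(\nu)\ \ge\ a\,\mu_1,$$
where the first inequality uses $\ell(\mu)\ge\ell(\nu)$, the second is the Farey bound $b\,l_{11}(\nu)\ge a\,H_{11}(\nu)$ rearranged, and the last uses $\mu_1\le\nu_1-1=a_{11}(\nu)$; the strict middle step forces the strict conclusion. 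The main obstacle is exactly the bookkeeping that licenses Lemma \ref{pre2lem}/\ref{preuniq} at a genuine crossing, namely that being a non-core while satisfying $\frac{l_{11}(\nu)}{H_{11}(\nu)}\ge\frac ab$ forces $b\mid H_{11}(\nu)$ or $b\mid\phi(\nu)$, so that $(1,1)$ and not merely some interior box realizing a $b\con$hook is the critical box. The one-row partition $(n)$ is precisely the degenerate case in which every inequality above becomes an equality, which is why it must be set aside and is consistent with its being the unique always-regular partition.
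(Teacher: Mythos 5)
Your overall skeleton (induction over the walls, the Farey-gap step, and the corner computation combining $\ell(\mu)\ge\ell(\nu)$ with $\mu_1\le\nu_1-1$) is the same as the paper's, and two of your sub-arguments are genuinely nicer: you settle the case where $\nu$ is a $b\con$core cleanly via Proposition \ref{MTcore}, and you derive the row-monotonicity $\ell(\mu)\ge\ell(\nu)$ from involutivity of $\Mb$ together with the unconditional estimate $j_1\le\lambda_1$, where the paper instead quotes \cite[Corollary 4.4]{bessenrodt1999properties}. However, the step you yourself flag as ``the main obstacle'' is a genuine gap, because the claim you propose to close it with is false: a $b\con$regular non-$b\con$core $\nu$ with $\frac{l_{11}(\nu)}{H_{11}(\nu)}\ge\frac{a}{b}$ need not satisfy $b\mid H_{11}(\nu)$ or $b\mid\phi(\nu)$. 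Concretely, take $n=7$ and start the wall-crossing at $(4,2,1)$: it is a $7\con$core, crossing $\frac{1}{6}$ yields $(3,2,1,1)$, and this is a $5\con$core, so $\nu=(3,2,1,1)$ is exactly the partition sitting before the wall $\frac{a}{b}=\frac{1}{4}$, with every regularity hypothesis met along the way. It is $4\con$regular, it is not a $4\con$core (the box $(2,1)$ has hook length $4$), and $\frac{l_{11}(\nu)}{H_{11}(\nu)}=\frac{3}{6}=\frac{1}{2}>\frac{1}{4}$; yet $H_{11}(\nu)=6$ and $\phi(\nu)=6$ (the $4\con$rim of $\nu$ is its entire rim), neither of which is divisible by $4$.

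Consequently the inequality $\mu_1\le\nu_1-1$ that your final chain requires also fails here: one computes $\mu=\nu^{\mathcal{W}_4}=\nu^{\X_4}=(3,1,1,1,1)$, so $\mu_1=3=\nu_1$, and the chain breaks at its last link, since $a\,a_{11}(\nu)=2<3=a\,\mu_1$. The conclusion of the Proposition does hold in this case, $\frac{l_{11}(\mu)}{H_{11}(\mu)}=\frac{4}{7}>\frac{1}{4}$, but only because the leg grows strictly, $\ell(\mu)=5>4=\ell(\nu)$ --- a mechanism your chain cannot see, as it uses only the weak bound $\ell(\mu)\ge\ell(\nu)$. In fairness, this strict-inequality, non-core case is precisely the one the paper's own proof dismisses with ``if the inequality is strict then we are done'' (its Lemma \ref{pre2lem} argument is run only when $\frac{l_{11}(\nu)}{H_{11}(\nu)}$ equals the wall, where $d\mid H_{11}$ is automatic); you correctly identified that this case is where the real work lies, but the divisibility statement you supply to handle it is not true, so the proof as proposed does not go through.
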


\begin{proof}
We prove this property by induction. For any partition $\lambda$ of $n$, $l_{11}\geq 1$ and $H_{11}\leq n$, hence in $\lambda$ we have $\frac{l_{11}}{H_{11}}>\frac{1}{n}$.

Assume that we have two consecutive terms of the Farey sequence $\frac{a}{b}<\frac{c}{d}$. We know by induction that the partition immediately before we cross $\frac{c}{d}$ satisfies $\frac{l_{11}}{H_{11}}>\frac{a}{b}$. Now since two consecutive elements in the Farey sequence give two consecutive slopes in an $n\times n$ square in the cartesian grid, it must hold that $\frac{l_{11}}{H_{11}}\geq\frac{c}{d}$. If the inequality is strict then we are done. If this is an equality, then $d\mid H_{11}$, so by Lemma \ref{pre2lem}, $a'_{11}+1=j_1\le \lambda_1-1=a_{11}$. Also the fact that Mullineux transpose respects the partial order \cite[Corollary 4.4]{bessenrodt1999properties} implies that the number of rows of $\lambda^{\MdT}$ is at least as much as the number of rows of $\lambda$. Thus after crossing $\frac{c}{d}$,
$$\frac{l'_{11}}{H'_{11}}-\frac{l_{11}}{H_{11}}=\frac{l'_{11}(a_{11}+1)-l_{11}(a'_{11}+1)}{H'_{11}H_{11}}>0$$ which implies the desired strict inequality. 
\end{proof}

\begin{Theorem}
\label{monotone}
The row partition $(n)$ is the unique partition of $n$ that stays regular after any step of the combinatorial wall-crossing transformation. For any other partition $\lambda$ of $n$ there exist an interval $I=\left[\frac{a_i}{b_i}, \frac{a_{i+1}}{b_{i+1}}\right]$ in $[0,1]$ defined by the Farey sequence such that $B_{I}(\lambda)$ is not $b_{i+1}\con$regular. 
\end{Theorem}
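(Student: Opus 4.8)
The plan is to prove the two directions separately, treating the substance as the uniqueness assertion, namely that no partition other than $(n)$ can stay regular throughout. That $(n)$ itself stays regular is already recorded at the beginning of this section as a consequence of Theorem \ref{main}: the partition $\lambda_I = B_I((n))$ is $b\con$regular whenever the right endpoint of $I$ has denominator $b$. So I would only need to rule out every other partition.

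First I would record the elementary universal bound that drives the argument. For any partition $\mu$ of $n$ with $\ell$ rows and largest part $\mu_1$, the corner box satisfies $l_{11}(\mu)=\ell-1$ and $H_{11}(\mu)=\mu_1+\ell-1$, so
\[
\frac{l_{11}(\mu)}{H_{11}(\mu)}=\frac{\ell-1}{\mu_1+\ell-1}\le\frac{n-1}{n},
\]
since cross-multiplying reduces the claim to $\ell-1\le(n-1)\mu_1$, which holds because $\ell\le n$ and $\mu_1\ge1$. Moreover, equality forces $\ell=n$ and $\mu_1=1$, i.e. $\mu=(1^n)$. Hence no partition of $n$ has $\tfrac{l_{11}}{H_{11}}$ strictly exceeding $\tfrac{n-1}{n}$.

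Next I would argue by contradiction. Suppose some $\lambda\neq(n)$ stays regular at every step, i.e. for every interval $I=\left[\frac{a_i}{b_i},\frac{a_{i+1}}{b_{i+1}}\right]$ the partition $B_I(\lambda)$ is $b_{i+1}\con$regular. Then each wall-crossing is an honest Mullineux transpose rather than the extended one, so the hypotheses of Proposition \ref{pre3prop} are met along the entire chain; and since $\lambda\neq(n)$ has at least two rows, the base case $l_{11}\ge1$ of that proposition is in force. Applying Proposition \ref{pre3prop} at the last wall $\frac{n-1}{n}$, which is the largest element of $F_n$ below $1$ and is followed by the final interval $\left[\frac{n-1}{n},1\right]$, the partition $B_{\left[\frac{n-1}{n},1\right]}(\lambda)$ obtained immediately after crossing it would satisfy $\frac{l_{11}}{H_{11}}>\frac{n-1}{n}$. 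This contradicts the universal bound above. Therefore no such $\lambda$ can remain regular throughout, which exhibits the desired interval $I$ with $B_I(\lambda)$ not $b_{i+1}\con$regular and completes the uniqueness.

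The step I expect to be most delicate is confirming that the hypotheses of Proposition \ref{pre3prop} are genuinely in force all the way to the final wall: one must check that ``$\lambda$ stays regular at every step'' is precisely the condition under which the inductive argument of that proposition runs without ever invoking the extended Mullineux map, and that its base case excludes exactly the one-row partition (this is the role of the requirement $l_{11}\ge1$, equivalently $\ell(\lambda)\ge2$). Everything else — the universal bound and the single application at $\frac{n-1}{n}$ — is immediate.
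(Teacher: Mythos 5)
Your proof is correct, and its core is the same as the paper's: assume $\lambda\neq(n)$ stays regular at every step, apply Proposition \ref{pre3prop} at the final wall $\frac{n-1}{n}$, and derive a contradiction from the resulting inequality $\frac{l_{11}}{H_{11}}>\frac{n-1}{n}$ in $\mu=B_{\left[\frac{n-1}{n},1\right]}(\lambda)$. Where you differ is the endgame. The paper reads that inequality as forcing $\mu=(1^n)$ and then invokes Remark \ref{Bbij} (each $B_I$ is a bijection on $\mathcal{P}_n$) together with Corollary \ref{endcolumn} ($B_{\left[\frac{n-1}{n},1\right]}((n))=(1^n)$) to conclude $\lambda=(n)$, a contradiction. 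You instead prove the elementary universal bound $\frac{l_{11}}{H_{11}}=\frac{\ell-1}{\mu_1+\ell-1}\le\frac{n-1}{n}$ for every partition of $n$, with equality exactly at $(1^n)$, so the strict inequality supplied by Proposition \ref{pre3prop} is impossible outright and the contradiction is immediate. Your version buys two things: it dispenses with Remark \ref{Bbij} and Corollary \ref{endcolumn} entirely, and it is actually more precise than the paper's wording --- the paper asserts that $\frac{a_{11}+1}{l_{11}}<\frac{1}{n-1}$ holds if and only if $\mu=(1^n)$, whereas $(1^n)$ in fact gives equality and the strict inequality holds for no partition of $n$ at all, which is exactly the content of your bound. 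You are also right to flag the hypotheses of Proposition \ref{pre3prop}: the base case of its inductive proof needs $l_{11}\ge 1$, which holds precisely because $\lambda\neq(n)$ has at least two rows; the paper's application relies on the same implicit fact.
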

\begin{proof}
Suppose there is a partition $\lambda\neq (n)$ of $n$ that stays regular after any step of the combinatorial wall-crossing transformation, consider $\mu=B_{\left[\frac{n-1}{n},1\right]}(\lambda)$. By Proposition \ref{pre3prop}, we have that in $\mu$, $\frac{l_{11}}{H_{11}}>\frac{n-1}{n}$, which is equivalent to $\frac{a_{11}+1}{l_{11}}<\frac{1}{n-1}$. But this inequality happens if and only if $\mu=(1^n)$. But by Remark \ref{Bbij} and Corollary \ref{endcolumn}, we know $\lambda=(n)$, which is a contradiction.

\end{proof}

\appendix

\section{Example of $n=5$ and a General Conjecture}
\label{appendixa}
In this appendix, we'll denote $B_I$ and $D_I$ in Definition \ref{CWC} as $B_I^2$ and $D_I^2$ respectively and define another series of transformations as follows.

\begin{Definition}
\label{firstalgo}
We define a collection of maps $B^1_I:\mathcal{P}_n\rightarrow\mathcal{P}_n$  where $I$ are intervals with endpoints being consecutive terms in $F_n$ as follows. First of all $B_{\left[0,\frac{1}{n}\right]}^1(\lambda)=\lambda$ is the identity map. Inductively, suppose we defined  $B_{I}^1$ where $I=\left[\frac{a_{i-1}}{b_{i-1}}, \frac{a_i}{b_i}\right]$. Then for the adjacent interval $I^\prime=\left[\frac{a_i}{b_i}, \frac{a_{i+1}}{b_{i+1}}\right]$, we define $B_{I^\prime}^1(\lambda)=\mu\cup b_i\nu^{\Tr}$ and $D^1_{I}(\lambda)=b_i\cdot|\nu|$. Here, $\mu$ and $\nu$ come from the unique decomposition $B_{I}^1(\lambda)=\mu\cup b_i\nu$ where $\mu$ has no parts divisible by $b_i$ and $b_i\nu$ is multiplying each part of $\nu$ by $b_i$.

 \end{Definition}

Farey sequence of $n=5$ is 
$$ \frac{1}{5}, \frac{1}{4}, \frac{1}{3}, \frac{2}{5}, \frac{1}{2}, \frac{3}{5}, \frac{2}{3}, \frac{3}{4}, \frac{4}{5}.$$

Now we perform the two algorithms as in Definition \ref{CWC} and \ref{firstalgo} to all partitions of $5$ and calculate the corresponding number function $D_I^1$ and $D_I^2$ respectively indicating the row and column irregular sizes respectively. The results are given in the following tables:
\renewcommand\arraystretch{1.5}
\begin{table}[!htb]
\centering
\caption {The First Algorithm, n=5}
\begin{tabular}{|c|c|c|c|c|c|c|c|c|c|c|}

\hline
Interval & $[0,\frac{1}{5}]$ & $[\frac{1}{5},\frac{1}{4}]$ & $[\frac{1}{4},\frac{1}{3}]$ & $[\frac{1}{3},\frac{2}{5}]$ & $[\frac{2}{5},\frac{1}{2}]$ & $[\frac{1}{2},\frac{3}{5}]$ & $[\frac{3}{5},\frac{2}{3}]$  & $[\frac{2}{3},\frac{3}{4}]$ & $[\frac{3}{4},\frac{4}{5}]$ & $[\frac{4}{5},1]$\\
\hline\hline
$B^1_I$ & $(5)$ & $(5)$ & $(5)$ & $(5)$ & $(5)$ & $(5)$ & $(5)$ & $(5)$ & $(5)$ & $(5)$ \\
\hline
$D^1_I$ & 5 & 0 & 0 & 5 & 0 & 5 & 0 & 0 & 5 & -\\
\hline\hline
 $B^1_I$ & $(4,1)$ &  $(4,1)$ &  $(4,1)$ &  $(4,1)$ &  $(4,1)$ &  $(2^2,1)$ & $(2^2,1)$ & $(2^2,1)$ & $(2^2,1)$ & $(2^2,1)$ \\
 \hline
 $D^1_I$ & 0 & 4 & 0 & 0 & 4 & 0 & 0 & 0 & 0 & -\\
 \hline\hline
 $B^1_I$ & $(3,2)$ & $(3,2)$ & $(3,2)$ & $(3,2)$ & $(3,2)$ & $(3,2)$ & $(3,2)$ & $(3,2)$ & $(3,2)$ & $(3,2)$ \\
 \hline
 $D^1_I$ & 0 & 0 & 3 & 0 & 2 & 0 & 3 & 0 & 0 & -\\
 \hline\hline
$B^1_I$ & $(3,1^2)$ & $(3,1^2)$ & $(3,1^2)$ & $(3,1^2)$ & $(3,1^2)$ & $(3,1^2)$ & $(3,1^2)$ & $(3,1^2)$ & $(3,1^2)$ & $(3,1^2)$ \\
\hline
$D^1_I$ & 0 & 0 & 3 & 0 & 0 & 0 & 3 & 0 & 0 & -\\
 \hline\hline
$B^1_I$ & $(2^2,1)$ & $(2^2,1)$ & $(2^2,1)$ & $(2^2,1)$ & $(2^2,1)$ & $(4,1)$ & $(4,1)$ & $(4,1)$ & $(4,1)$ & $(4,1)$ \\
\hline
$D^1_I$ & 0 & 0 & 0 & 0 & 4 & 0 & 0 & 4 & 0 & -\\
 \hline\hline
$B^1_I$ & $(2,1^3)$ & $(2,1^3)$ & $(2,1^3)$ & $(2,1^3)$ & $(2,1^3)$ & $(2,1^3)$ & $(2,1^3)$ & $(2,1^3)$ & $(2,1^3)$ & $(2,1^3)$\\
\hline
$D^1_I$ & 0 & 0 & 0 & 0 & 2 & 0 & 0 & 0 & 0 & -\\
 \hline\hline
$B^1_I$ & $(1^5)$ & $(1^5)$ & $(1^5)$ & $(1^5)$ & $(1^5)$ & $(1^5)$ & $(1^5)$ & $(1^5)$ & $(1^5)$ & $(1^5)$ \\
\hline
$D^1_I$ & 0 & 0 & 0 & 0 & 0 & 0 & 0 & 0 & 0 & -\\
 \hline\hline
\end{tabular}
\end{table}

\renewcommand\arraystretch{1.5}
\begin{table}[!htb]
\centering
\caption {The Second Algorithm, n=5}
\begin{tabular}{|c|c|c|c|c|c|c|c|c|c|c|}

\hline
Interval & $\left[0,\frac{1}{5}\right]$ & $\left[\frac{1}{5},\frac{1}{4}\right]$ & $\left[\frac{1}{4},\frac{1}{3}\right]$ & $\left[\frac{1}{3},\frac{2}{5}\right]$ & $\left[\frac{2}{5},\frac{1}{2}\right]$ & $\left[\frac{1}{2},\frac{3}{5}\right]$ & $\left[\frac{3}{5},\frac{2}{3}\right]$  & $\left[\frac{2}{3},\frac{3}{4}\right]$ & $\left[\frac{3}{4},\frac{4}{5}\right]$ & $\left[\frac{4}{5},1\right]$\\
\hline\hline
$B_I^2$ & $(1^5)$ & $(5)$ & $(3,2)$ & $(1^5)$ & $(5)$ & $(1^5)$ & $(5)$ & $(2^2,1)$ & $(1^5)$ & $(5)$ \\
\hline
$D_I^2$ & 5 & 0 & 0 & 5 & 0 & 5 & 0 & 0 & 5 & -\\
\hline\hline
 $B_I^2$ & $(2,1^3)$ &  $(1^5)$ &  $(5)$ &  $(2^2,1)$ &  $(2^2,1)$ &  $(5)$ & $(4,1)$ & $(3,2)$ & $(2^2,1)$ & $(2^2,1)$ \\
 \hline
 $D_I^2$ & 0 & 4 & 0 & 0 & 4 & 0 & 0 & 0 & 0 & -\\
 \hline\hline
 $B_I^2$ & $(2^2,1)$ & $(2^2,1)$ & $(1^5)$ & $(4,1)$ & $(3,1^2)$ & $(3,1^2)$ & $(2,1^3)$ & $(5)$ & $(3,2)$ & $(3,2)$ \\
 \hline
 $D_I^2$ & 0 & 0 & 3 & 0 & 2 & 0 & 3 & 0 & 0 & -\\
 \hline\hline
$B_I^2$ & $(3,1^2)$ & $(2,1^3)$ & $(2,1^3)$ & $(5)$ & $(4,1)$ & $(2,1^3)$ & $(1^5)$ & $(4,1)$ & $(4,1)$ & $(3,1^2)$ \\
\hline
$D_I^2$ & 0 & 0 & 3 & 0 & 0 & 0 & 3 & 0 & 0 & -\\
 \hline\hline
$B_I^2$ & $(3,2)$ & $(3,2)$ & $(2^2,1)$ & $(2,1^3)$ & $(1^5)$ & $(3,2)$ & $(3,2)$ & $(1^5)$ & $(5)$ & $(4,1)$ \\
\hline
$D_I^2$ & 0 & 0 & 0 & 0 & 4 & 0 & 0 & 4 & 0 & -\\
 \hline\hline
$B_I^2$ & $(4,1)$ & $(3,1^2)$ & $(3,1^2)$ & $(3,1^2)$ & $(2,1^3)$ & $(4,1)$ & $(3,1^2)$ & $(3,1^2)$ & $(3,1^2)$ & $(2,1^3)$\\
\hline
$D_I^2$ & 0 & 0 & 0 & 0 & 2 & 0 & 0 & 0 & 0 & -\\
 \hline\hline
$B_I^2$ & $(5)$ & $(4,1)$ & $(4,1)$ & $(3,2)$ & $(3,2)$ & $(2^2,1)$ & $(2^2,1)$ & $(2,1^3)$ & $(2,1^3)$ & $(1^5)$ \\
\hline
$D_I^2$ & 0 & 0 & 0 & 0 & 0 & 0 & 0 & 0 & 0 & -\\
 \hline\hline
\end{tabular}
\end{table} 

The following conjecture is due to Bezrukavnikov:
\begin{Conjecture}[Bezrukavnikov]
$D_I^1(\lambda)=D^2_I(\lambda^{\Tr})$ for every partition $\lambda$ of $n$ and interval $I$ with endpoints consecutive entries in Farey sequence of $n$.
\end{Conjecture}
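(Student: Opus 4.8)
The plan is to attack the conjecture interval by interval, inducting along the Farey filtration from $\left[0,\tfrac1n\right]$ to $\left[\tfrac{n-1}{n},1\right]$. The base interval is immediate, since $B^1_{[0,1/n]}(\lambda)=\lambda$, $B^2_{[0,1/n]}(\lambda^{\Tr})=\lambda^{\Tr}$, and at the wall $\tfrac1n$ an active part exists exactly when $n$ divides a part of $\lambda$, a condition unchanged by transposition. The first thing to record is an obstruction: the naive inductive hypothesis $B^1_I(\lambda)=\big(B^2_I(\lambda^{\Tr})\big)^{\Tr}$ is \emph{false}. For $n=5$ the two tables above show that on $I=\left[\tfrac15,\tfrac14\right]$ the first algorithm still reads $(4,1)$ while the second has already carried $(2,1^3)$ to $(1^5)$, and $(1^5)^{\Tr}=(5)\neq(4,1)$. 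So the intermediate partitions genuinely diverge and the induction must transport a coarser, transpose-stable datum.

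Both steps share a suggestive shape. Writing the $b$-regular decomposition $\pi=\nu\cup b\star\mu$ of Definition \ref{irregular}, the wall-crossing $\pi^{\mathcal{W}_b}=(\nu^{\Mb}\cup b\star\mu^{\Tr})^{\Tr}$ replaces the active block $b\star\mu$ by $b\star\mu^{\Tr}$, and $D^1_I$'s companion $D^2_I$ records its size $b|\mu|=b|\Irr_b(\pi)|$, the boxes in rows whose \emph{multiplicity} is divisible by $b$. The first algorithm (Definition \ref{firstalgo}) instead isolates the rows whose \emph{length} is divisible by $b$, transposes that block, and $D^1_I$ records its size. These two statistics are \emph{not} transpose-dual as functions: at the wall $b=4$ one has $D^1\big((4,1)\big)=4$ but $D^2\big((2,1^3)\big)=0$, since $4$ divides the part $4$ of $(4,1)$ yet no multiplicity of $(2,1^3)$. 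The equality is rescued only by the dynamics, because by the time the wall $b=4$ is reached the second algorithm has replaced $(2,1^3)$ with $(1^5)$, whose unit rows \emph{do} form a block of multiplicity divisible by $4$. Thus the statement is a genuinely dynamical coincidence, and the core of the plan is to identify the invariant that survives along each run.

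The invariant I would try to propagate is the size of the active block, reduced to a normal form via the quotient. Concretely I would: (i) establish a \emph{support lemma}, that the wall $b$ is active for the first algorithm on $\lambda$ iff it is active for the second on $\lambda^{\Tr}$ — the tables suggest the two supports coincide, and this weaker statement should already force the denominators to match step by step; (ii) prove a \emph{total-mass identity} $\sum_I D^1_I(\lambda)=\sum_I D^2_I(\lambda^{\Tr})$ as an anchor; and (iii) upgrade to the interval-by-interval equality by showing that the Mullineux correction $\nu\mapsto\nu^{\Mb}$ in $\mathcal{W}_b$, as opposed to the identity used by the first algorithm, never changes the size of the next active block. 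For (iii) the tools are Proposition \ref{MTcore}, the core-preservation of $\mathcal{W}_b$ used in Section \ref{secquotient} (due to \cite{ford1997proof}), and the explicit $\I,\J,\Xb$ hook-stripping model of $\mathcal{W}_b$ from \cite{bessenrodt1999properties}; I would reduce each local step to the ``$b$-core plus a single rectangular quotient component'' situation that already governs the one-row case in Theorem \ref{main}, and check the block sizes there directly.

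The hard part is (iii), and the essential difficulty is a mismatch of technologies. The statistic $D^2=b|\Irr_b|$ is a \emph{multiplicity} statistic, not a $b$-core/quotient invariant: for $(3,1)$ with $b=2$ one has $\Irr_2=\emptyset$, so $D^2=0$, while the $2$-quotient has size $2$. Hence the abacus and hook-stripping machinery that cleanly controls the Mullineux map does not directly compute $D^2$; symmetrically, $D^1$ is a transparent length-divisibility count, but its evolution under the Mullineux-corrected step, and across successive walls of \emph{different} denominators (which do not preserve one another's cores), is opaque. Reconciling the two therefore seems to require either an explicit box-level bijection carried along the entire run, or the representation-theoretic reading behind the two algorithms — transposition is the sign twist on Specht modules while the Mullineux map is the sign twist on the simple modules — under which $D^1$ and $D^2$ become two measurements of the same amount of sign-twisting and are forced to agree. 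Converting that heuristic into a combinatorial invariant is, I expect, the crux of the conjecture.
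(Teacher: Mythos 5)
Your submission cannot be measured against a proof in the paper, because the paper contains none: the statement is labelled a Conjecture (of Bezrukavnikov), supported only by the $n=5$ tables and by the remark that it would re-derive the regularity of the trajectory of $(n)$, which the paper instead proves independently as Theorem \ref{monotone}. What you have written is, as you yourself concede, a research program rather than a proof: of your three steps, the support lemma (i) and the total-mass identity (ii) are asserted without argument, and step (iii) is explicitly left open. To your credit, the preparatory observations are accurate and worth recording: the base interval $\left[0,\tfrac1n\right]$ is checked correctly; the counterexample showing that the naive inductive hypothesis $B^1_I(\lambda)=\bigl(B^2_I(\lambda^{\Tr})\bigr)^{\Tr}$ fails (the $(4,1)$ versus $(2,1^3)$ rows at $I=\left[\tfrac15,\tfrac14\right]$) is read correctly off the tables; and the failure of the static identity (e.g.\ $D^1=4$ on $(4,1)$ at $b=4$ while $\Irr_4\bigl((2,1^3)\bigr)=\emptyset$) correctly shows the conjecture is irreducibly dynamical, which is a genuine insight not made explicit in the paper.

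The concrete gaps are these. First, step (i) is not a separately attackable weakening: since the trajectories $B^1_I(\lambda)$ and $B^2_I(\lambda^{\Tr})$ diverge as partitions, knowing which walls are active for one algorithm says nothing about the other unless one already transports an invariant between the two runs --- and identifying that invariant is the whole problem, so (i) is essentially the conjecture itself (indeed $D^1_I(\lambda)=D^2_I(\lambda^{\Tr})$ for all $I$ is barely stronger than the supports coinciding with equal masses). Second, your own example $(3,1)$ with $b=2$, where $\Irr_2=\emptyset$ but the $2$-quotient has size $2$, undercuts the mechanism you propose for (iii): you plan to reduce each local step to a core-plus-rectangular-quotient normal form via Proposition \ref{MTcore}, the core preservation cited in Section \ref{secquotient}, and the $\I$, $\J$, $\X_b$ model, but you have just demonstrated that $D^2_I$ is not a function of the $b$-core and $b$-quotient, so this machinery cannot even evaluate the statistic you must track, and no substitute is identified. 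Third, the reduction to the situation of Theorem \ref{main} is unavailable for general $\lambda$: that theorem's inductive structure (each $\lambda_k$ a $b$-core away from a single ladder, all walls crossed in the regular regime) is special to $(n)$, and by Theorem \ref{monotone} every other partition eventually becomes irregular at some wall, which is exactly where $\mathcal{W}_b$ acts through the decomposition of Definition \ref{extMT} and where the paper's Section \ref{mainsection} analysis gives no control. In short: your diagnosis of why the problem is hard is sound and consistent with everything the paper proves, but nothing in the proposal closes the gap; the statement remains open in the paper and in your write-up alike.
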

\begin{Remark}
The monotonicity of case $(n)$ in the second algorithm is direct if the conjecture is true since $B_I^1((1^n))=1^n$ for every interval $I$ and $D_I^1((1^n))=0$. Hence $D_I^2((n))=0$, which is exactly saying $B^2_I((n))=\lambda_I$ is always regular corresponding to the denominator of the right endpoint of $I$.
\end{Remark}


\newpage
\bibliographystyle{alpha}
\bibliography{MTCOLREG}

\end{document}